\def\hpq0{h^{p,q}_{\leq 0}}
\def\Hpq0{\H_{\leq 0}^{p,q}}
\def\dbar{\bar\partial}
\def\ddbar{\partial\dbar}
\def\R{{\mathbb R}}
\def\C{{\mathbb C}}
\def\H{{\mathcal H}}
\def\E{{\mathcal E}}
\def\Re{{\rm Re\,  }}
\def\Im{{\rm Im\,  }}
\def\p{\dot{\phi^{t}}}
\def\be{\begin{equation}}
\def\ee{\end{equation}}
\def\om{\omega^{t}_n}
\newtheorem{thm}{Theorem}[section]
\newtheorem{lma}[thm]{Lemma}
\newtheorem{prop}[thm]{Proposition}
\theoremstyle{definition}
\theoremstyle{remark}
\newtheorem{preremark}{Remark}
\newtheorem{preex}{Example}
\numberwithin{equation}{section}
\begin{document}

\title[]
{ Probability measures associated to geodesics in the space of
  K\"ahler metrics.}

\author[]{ Bo Berndtsson}

\address{B Berndtsson :Department of Mathematics\\Chalmers University
  of Technology 
  and the University of G\"oteborg\\S-412 96 G\"OTEBORG\\SWEDEN,\\}

\email{ bob@math.chalmers.se}
\begin{abstract}
{We associate certain probability measures on $\R$ to geodesics in the
space $\H_L$ of positively curved metrics on a line bundle $L$, and to
geodesics in the finite 
dimensional symmetric space of hermitian norms on $H^0(X, kL)$. We
prove that the measures associated to the finite dimensional spaces
converge weakly to the measures related to geodesics in $\H_L$ as $k$
goes to infinity. The
convergence of second order moments  implies a recent result of Chen and
Sun on geodesic distances in the respective spaces, while the
convergence of first order moments gives convergence of Donaldson's
$Z$-functional to the Aubin-Yau energy. We also include a
result on approximation of infinite dimensional geodesics by Bergman
kernels which generalizes work of Phong and Sturm.}
\end{abstract}

\bigskip

\maketitle

\section{Introduction}

Let $X$ be a compact K\"ahler manifold and $L$ an ample line bundle over
$X$. If $\phi$ is a hermitian metric on $L$ with positive curvature,
then 

$$
\omega^\phi:= i\ddbar \phi
$$
is a K\"ahler metric on $X$ with K\"ahler form in the Chern class of
$L$, $c(L)$ , and
we let $\H_L$ denote the space of all such  K\"ahler potentials. By the
work of Mabuchi, Semmes and Donaldson (see \cite{Mabuchi},
\cite{Semmes}, \cite{Donaldson1}), $\H_L$ can be given the structure
of an infinite dimensional, negatively curved Riemannian manifold, or
even symmetric space. With this space one can associate certain finite
dimensional symmetric spaces in the following way. Take a positive
integer $k$ and let $V_k$ be the vector space of global holomorphic
sections of $kL$,
$$
V_k= H^0(X, kL).
$$
(Later we shall consider more generally vector spaces $H^0(X, kL + F)$
where $F$ is a fixed line bundle, but for simplicity we omit $F$ in
this introduction.) The finite dimensional symmetric spaces in question
are then the spaces $\H_k$ of hermitian norms on $V_k$.

There are for any $k$  natural maps
$$
FS = FS_k : \H_k \mapsto \H_L,
$$
and 
$$
Hilb=Hilb_k : \H_L \mapsto \H_k,
$$
and a basic ide in the study of K\"ahler metrics on $X$ with K\"ahler form
in $c(L)$ is that under these maps the finite dimensional spaces
$\H_k$ should approximate $\H_L$ as $k$ goes to infinity. This will be
explained a bit more closely in the next section of this paper, see also
\cite{ Donaldson1}, \cite{Phong-Sturm} and \cite{ Chen-Sun} for 
excellent backgrounds 
to these ideas. 

The most basic result in this direction is the result of Bouche,
\cite{Bouche} and Tian, \cite{Tian} that for $\phi$ in $\H_L$ 
$$
\phi_k:= FS_k\circ Hilb_k(\phi)
$$
tends to $\phi$  together with its derivatives. 
It is natural to ask whether geodesics between points in $\H_L$ also
can be approximated in some sense by geodesics coming from the finite
dimensional picture. This question was first adressed by Phong and
Sturm in \cite{Phong-Sturm}, where it is proved that any geodesic in
$\H_L$ is a limit of $FS_k$ of
geodesics in $\H_k$, in an almost uniform way (see below). Later, this
result has 
been refined in particular cases (like toric varieties) to give
convergence of derivatives as well by Song-Zelditch,
Rubinstein-Zelditch and Rubinstein, see \cite{Song-Zelditch},
\cite{Rubinstein-Zelditch}, \cite{Rubinstein}. (These works also treat
more general equations than  the geodesic equation.)

In a recent very interesting paper, \cite{Chen-Sun} , Chen and Sun
have shown that moreover if $\phi^0$ and $\phi^1$ are two K\"ahler
potentials in $\H_L$, then the geodesic distance, suitably normalized,
between $Hilb_k(\phi^0)$ 
and $Hilb_k(\phi^1)$ in $\H_k$ tends to the geodesic distance between
$\phi^0$ and $\phi^1$ in $\H_L$. Hence $\H_k$ approximates $\H_L$ as
metric spaces in this sense. 

In this paper we associate to  geodesics, in $\H_k$ and $\H_L$
respectively, certain probability measures on $\R$ from which many
quantities related to the geodesic (like length, energy) can be
recovered. The main result of the paper is that the measures
associated to geodesics in $\H_k$ converge to their counterparts in
$\H_L$  in the weak *-topology as $k$ goes to infinity. It follows that
their moments converge, which applied to second moments  implies the
result of Chen and Sun on convergence of geodesic distance. 

Let $H^0_k$ and $H^1_k$ be two points in $\H_k$, and let 
$H^t_k $ be the geodesic in $\H_k$ connecting them. The tangent vector
to this geodesic
$$
A_{t, k}:= (H^t_k)^{-1} \dot{H^t_k}
$$
is then an endomorphism of $V_k$. The geodesic condition means that
it is actually independent of $t$ so we will omit the $t$ in the
subscript. Since $A_k$ is hermitian for the
scalar products in the curve all its eigenvalues are real. Let
$\nu_k=\nu_{A_k}$ be the  normalized spectral measure of $ k^{-1}
A_k$. By this we mean that
$$
\nu_k= d_k^{-1}\sum \delta_{\lambda_j},
$$
where $\lambda_j$ are the eigenvalues of $k^{-1} A_k$ and $d_k$ is the
dimension of $V_k$, so that $\nu_k$ are probability measures on $\R$. 

The second order moment of $\nu_k$ is precisely the norm
squared 
of the vector $A_k$ in the tangent space of $\H_k$, divided by
$d_k$. Since this is 
independent of $t$ and $t$ goes from 0 to 1, the second order moment
equals the square of the normalized geodesic distance between $H^0_k$ and
$H^1_k$. We shall 
also see in section 2 that the first order moment of $\nu_k$ equals
the Donaldson functional
$$
Z(H^0_k, H^1_k)/d_k
$$
from \cite{Donaldson2}.

We next describe the corresponding objects for the infinite
dimensional space $\H_L$. Let $\phi^0$ and $\phi^1$ be two points in
$\H_L$ and let $\phi^t$ be the Monge-Ampere geodesic joining them. By
this we mean that $\phi^t$ is a curve of positively curved metrics on
$L$  for $t$
between 0 and 1. We extend the definition of $\phi^t$ to complex $t$ in
$$
\Omega:=\{0<\Re t<1\}
$$
by letting it be indepent of the imaginary part of $t$. The geodesic
equation is then 
$$
(i\ddbar \phi^t)^{n+1}=0
$$
on $\Omega\times X$.

 It
was proved by Chen in \cite{Chen} that such a geodesic always exists
and is of class $C^{ 1,1}$ in the sense that all $(1,1)$-derivatives 
are uniformly bounded. It is unknown if the geodesic is actually
smooth. A 'geodesic in $\H_L$' is therefore not necessarily a curve in
$\H_L$ (which consists of smooth metrics), but we will adhere to
the common terminology nevertheless.  For each $t$ fixed we can now define a
probability measure on $\R$ in the following way. Let first $dV_t$ be
the normalized volume measure on $X$ induced by $\omega^{\phi^t}$,
$$
dV_t:= \omega^{\phi^t}_n/Vol.
$$
Here $\omega_n:=\omega^n/n!$ for $(1,1)$-forms $\omega$ and Vol is the
volume of $X$ 
$$
Vol=\int_X c(L)_n.
$$
Since $\dot{\phi^{t}} $ is a continuous real valued function, we can
consider the direct image (or 'pushforward') of $dV_t$
\be
\mu_t = (-\p)_*(dV_t)
\ee
so that $\mu_t$ is a probability measure on $\R$. 
Concretely, this means that if $f$ is a continuous function on $\R$,
then 
$$
\int_{\R} f(x) d\mu_t(x) = \int_X f(-\p) dV_t.
$$
We shall show in
the next section that if $\phi^t$ is a Monge-Ampere geodesic, then
$\mu=\mu_t$ is  independent of $t$. This is then the measure
that corresponds to the spectral measures $\nu_k$ in the infinite
dimensional setting, and our main results says that $\nu_k$ converge
to $\mu$ in the weak* topology as $k$ goes to infinity. 

\begin{thm} Let $\phi^0$ and $\phi^1$ be two points in $\H_L$ and let 
$$
H^t_k= Hilb_k(\phi^t)
$$
for $t=0,1$ be the corresponding norms in  $\H_k$. Let for $t$ between
0 and 1 $H^t_k$ be the geodesic in $\H_k$ connecting these two norms
and let $\nu_k$ be their normalized spectral measures as defined
above. Then 
$$
\nu_k \longrightarrow \mu,
$$
 in the weak* topology, where
$\mu=\mu_t$ is defined in 1.1. 
\end{thm}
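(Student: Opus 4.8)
The plan is to reduce the weak* convergence to convergence of all moments, and then to compute the moments of $\nu_k$ by Bergman kernel asymptotics. First I would check that the measures live on a fixed compact interval. The endpoints $\phi^0,\phi^1$ are smooth and the geodesic $\phi^t$ is of class $C^{1,1}$ by Chen's theorem, so $-\dot\phi^t$ is uniformly bounded; and the eigenvalues of $k^{-1}A_k$ are the numbers $\lambda$ solving the generalized eigenvalue problem $H^1_k s = e^{k\lambda}H^0_k s$, i.e. $e^{k\lambda}=\|s\|^2_{H^1_k}/\|s\|^2_{H^0_k}$ along eigenvectors, so a comparison of the two norms bounds $|\lambda|$ by $\sup_X|\phi^1-\phi^0|+o(1)$. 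Since $\mu$ is likewise compactly supported, it suffices to prove that for every integer $p\ge 0$ one has $\int_\R x^p\,d\nu_k \to \int_\R x^p\,d\mu$.

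Next I would rewrite the moments as normalized traces. Because $A_k$ is hermitian and independent of $t$,
$$
\int_\R x^p\,d\nu_k=\frac{1}{d_k}\,\mathrm{tr}\Bigl(\bigl(k^{-1}A_k\bigr)^p\Bigr),\qquad
\int_\R x^p\,d\mu=\int_X(-\dot\phi^t)^p\,dV_t ,
$$
where the right-hand integral may be evaluated at any fixed $t$ since $\mu_t$ is independent of $t$ (Section 2). The heart of the matter is then to identify $k^{-1}A_k$, to leading order in $k$, with the Toeplitz operator $T^t_{-\dot\phi^t}$ acting on $(V_k,H^t_k)$, defined by $\langle T^t_f s,s'\rangle_{H^t_k}=\int_X f\,\langle s,s'\rangle\, e^{-k\phi^t}\,dV_t$. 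Granting this, the conclusion follows from standard Toeplitz calculus: the near-multiplicativity $T^t_fT^t_g=T^t_{fg}+O(k^{-1})$ together with the Bouche--Tian asymptotics $\mathrm{tr}(T^t_f)=\int_X f\,B^{\phi^t}_k\,dV_t$ and $B^{\phi^t}_k\,dV_t/d_k\rightharpoonup dV_t$ give $d_k^{-1}\mathrm{tr}\bigl((T^t_{-\dot\phi^t})^p\bigr)\to\int_X(-\dot\phi^t)^p\,dV_t$, which is exactly the $p$-th moment of $\mu$.

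The mechanism for the identification of $A_k$ is to compare the genuine geodesic $H^t_k$ in $\H_k$ with the curve $t\mapsto Hilb_k(\phi^t)$, which agrees with it at $t=0,1$. Differentiating the explicit formula $\|s\|^2_{Hilb_k(\phi^t)}=\int_X|s|^2 e^{-k\phi^t}\,dV_t$ in $t$ shows that the velocity of the $Hilb$-curve is $T^t_{-k\dot\phi^t}$ plus a term of order $O(1)$ coming from the differentiation of the volume factor $dV_t$; dividing by $k$ this is $T^t_{-\dot\phi^t}+O(k^{-1})$. The remaining point is that the geodesic velocity $A_k$, which is constant in $t$, has the same spectral asymptotics as this $t$-dependent $Hilb$-velocity. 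This is the natural place to use the Phong--Sturm approximation of the infinite-dimensional geodesic by $FS_k$ of the finite-dimensional geodesics, which forces $H^t_k$ and $Hilb_k(\phi^t)$ to be close along the whole segment, and to use the consistency built into the fact that $\mu_t$ does not depend on $t$.

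I expect the \emph{main obstacle} to be precisely this last comparison, for two intertwined reasons. First, the geodesic $\phi^t$ is only $C^{1,1}$, so $\dot\phi^t$ is merely bounded and the Bergman kernel and Toeplitz asymptotics must be deployed in a form valid for metrics of low regularity and, crucially, with estimates that are uniform in $t\in[0,1]$ and $k$. Second, one must quantify how the actual one-parameter subgroup $H^t_k=H^0_k e^{tA_k}$ deviates from the $Hilb$-curve in the interior of the segment and show that this deviation does not affect the limit of $d_k^{-1}\mathrm{tr}((k^{-1}A_k)^p)$; for $p=2$ this can be seen from the fact that the geodesic minimizes energy among curves with the given endpoints, pinning the second moment between computable quantities, but for general $p$ a more robust argument controlling the full spectral measure is required. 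Once the uniform Bergman/Toeplitz estimates are in place, the first moment reproduces the convergence of Donaldson's $Z$-functional to the Aubin--Yau energy and the second moment reproduces the Chen--Sun convergence of geodesic distances, as asserted in the introduction.
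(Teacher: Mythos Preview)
Your overall strategy---reduce to moments, relate the velocity to a Toeplitz operator, and invoke Toeplitz spectral asymptotics---is exactly the shape of the paper's argument, and the Toeplitz calculus you outline is precisely what the appendix supplies. The genuine gap is the step you yourself flag as the main obstacle: passing from closeness of the curves $H^t_k$ and $Hilb_k(\phi^t)$ to a comparison of their \emph{velocities}. A Phong--Sturm type statement gives $\sup|k^{-1}\log B_{t,k}-\phi^t|=O(k^{-1}\log k)$, hence that the two curves of norms differ multiplicatively by at most $k^C$; but this says nothing about $t$-derivatives. Two curves that agree at the endpoints and stay within $O(\log k)$ of each other can have velocities differing by $O(\log k)$, which after division by $k$ gives no control on $k^{-1}A_k - T^t_{-\dot\phi^t}$. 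Your energy-minimization remark for $p=2$ only bounds $\mathrm{tr}(A_k^2)$ from \emph{below} by the energy of the $Hilb$-curve, so even the second moment is not pinned down that way.

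The paper replaces this step entirely by a \emph{one-sided} comparison coming from the curvature estimate of \cite{Berndtsson}: since the Monge--Amp\`ere geodesic $\phi^t$ is a subgeodesic, the induced $L^2$-norms $H_{\phi^t}$ define a bundle metric of semipositive curvature over the strip, and therefore lie \emph{above} the flat (geodesic) metric $H^t_k$ with the same boundary values. Equality holds at $t=0,1$, so differentiating the inequality at the endpoints yields operator inequalities
\[
T_{0,k}\le A_k\quad(\text{at }t=0),\qquad A_k\le T_{1,k}\quad(\text{at }t=1),
\]
where $T_{t,k}$ is the Toeplitz operator with symbol $-k\dot\phi^t$ (up to lower order). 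This pinches the ordered eigenvalues $\lambda_j(k)$ between those of $T_{0,k}$ and $T_{1,k}$. The Toeplitz asymptotics then show that the spectral measures of $k^{-1}T_{0,k}$ and $k^{-1}T_{1,k}$ converge to $\mu_0$ and $\mu_1$ respectively, and the independence $\mu_0=\mu_1=\mu$ closes the sandwich (first for increasing test functions, then for all $C^1$ functions by writing them as differences of increasing ones). Note in particular that no Bergman or Toeplitz asymptotics are needed for the $C^{1,1}$ metrics $\phi^t$ in the interior: only the smooth endpoint metrics $\phi^0,\phi^1$ enter the analytic part, which disposes of your first anticipated difficulty as well.
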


Just like the spectral measures of the endomorphisms $A_k$ contain
part of the properties of the corresponding geodesics in $\H_k$, part
of the properties of the Monge-Ampere geodesic can be read off from  the
measure $\mu$. It is for instance immediately clear that the second
order moment of $\mu$ is equal to
$$
\int_X \p^2 dV_t/Vol
$$
which is the length square of the tangent vector to the Monge-Ampere geodesic
( which is independent of $t$ as it should be). Since the parameter
interval is from 0 to 1 the length of the tangent vector is the length
of the geodesic from $\phi^0$ to $\phi^1$. By a theorem of Chen,
\cite{Chen}, the length of the geodesic is equal to the geodesic
distance, so the convergence of second order moments implies the
theorem of Chen and Sun, \cite{Chen-Sun} that normalized geodesic distance in
$\H_k$ converges to geodesic distance in $\H_L$. Similarily we shall
see in the next section that the first order moment of $\mu$ is the
Aubin-Yau energy of the pair $\phi^0$ and $\phi^1$, and convergence of
first order moments therefore says that the Aubin-Yau energy is the
limit of Donaldson's $Z$-functional (this is a much simpler result). 

The proof of our main result is given in section 3; it is based on the
curvature estimates from \cite{Berndtsson}. The basic idea is as
follows: The Monge-Ampere geodesic $\phi^t$ induces a certain curve of
norms in $\H_k$, $H_{\phi^t, k}$. These are $L^2$-norms on
  the space of global sections, similar to the curves
  $Hilb_k(\phi^t)$ but defined slightly differently to fit with the results of
  \cite{Berndtsson} . At the end points, $t=0,1$,
$$
 H_{\phi^t, k}= H^t_k:= Hilb_k(\phi),
$$
and we define $H^t_k$ for $t$ between 0 and 1 to be the geodesic in
$\H_k$ between these endpoint values. The main result of
\cite{Berndtsson} immediately implies that
$$
 H_{\phi^t, k} \geq H^t_k
$$
for $t$ between 0 and 1, and by definition equality holds at the
endpoints.
Let
$$
T_{t,k}:=H_{\phi^t, k}^{-1}\dot{H}_{\phi^t, k}
$$
Differentiating with respect to $t$ at $t=0, 1$ we then get that
$$
\langle A_k u,u\rangle_{H^0_k}\leq \langle T_{0, k} u,u\rangle_{H^0_k}
$$ 
and 
$$
\langle A_k u,u\rangle_{H^1_k}\geq \langle T_{1, k} u,u\rangle_{H^1_k}
$$ 
This means that we get estimates for the tangent vector to the finite
dimensional geodesic in terms of certain operators on $V_k$ defined by
the Monge-Ampere geodesic. These operators are Toepliz operators on
$V_k$ with symbol $\p$, $t=0, 1$ and their spectral measures are 
essentially known to converge to $\mu_t=\mu$ . Since $A_k$ is pinched
between these two operators it is not hard to see that the spectral
measures of $A_k$ have the same limit, which proves the theorem.

In a final section  we will give a result on the uniform convergence
of $FS_k$ of finite dimensional geodesics to Monge-Ampere
geodesics, generalizing the work of Phong-Sturm mentioned
earlier. This result is only a small variation of Theorem 6.1 from 
\cite{Berndtsson}, but it has as a consequence the following theorem
which is more natural than Theorem 6.1 in \cite{Berndtsson} so it seems
good to state it explicitly. 

\begin{thm} Let $\phi^0$ and $\phi^1$ be two K\"ahler potentials  in
  $\H_L$ and let $\phi^t$ be the Monge-Ampere geodesic joining
  them. Let
$$
H_k^t =Hilb_k(\phi^t)
$$
for $t=0, 1$ and let $H_k^t$ for $t$ between 0 and 1 be the
geodesic in $\H_k$ between these two points. Let finally
$$
B_{t,k}:= FS_k(H^t_k)
$$
for $0\leq t\leq 1$. Then
$$
\sup| k^{-1}\log B_{t, k} -\phi^t|\leq C\frac{\log k}{k}.
$$
\end{thm}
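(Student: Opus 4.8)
The plan is to prove the uniform estimate by combining the pinching
inequalities from the sketch of Theorem~1.2 with the sandwich
inequality $H_{\phi^t,k}\geq H^t_k$ coming from \cite{Berndtsson},
and then translating an estimate for norms on $V_k$ into an estimate
for the Fubini-Study potentials $B_{t,k}=FS_k(H^t_k)$. The key point
is that $FS_k$ is monotone: if $H\geq H'$ are two norms in $\H_k$,
then $FS_k(H)\leq FS_k(H')$ as metrics on $kL$ (a larger norm forces
the extremal sections to be smaller in sup, hence a smaller
Fubini-Study weight). Thus from $H_{\phi^t,k}\geq H^t_k$ I get
$FS_k(H_{\phi^t,k})\leq B_{t,k}$, which gives the one-sided bound
$k^{-1}\log B_{t,k}\geq k^{-1}\log FS_k(H_{\phi^t,k})$.

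\smallskip

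First I would control the upper endpoint of the sandwich. By the
Bouche-Tian result quoted in the introduction, the iterated composition
$FS_k\circ Hilb_k(\phi^t)$ converges to $\phi^t$, and the sharper
quantitative form (the Tian-Yau-Zelditch expansion of the Bergman
kernel) gives
$$
\abs{k^{-1}\log FS_k(H_{\phi^t,k})-\phi^t}\leq C\frac{\log k}{k}
$$
uniformly in $t$, once one checks that the $C^{1,1}$ bounds on the
Monge-Ampere geodesic make the constant $C$ independent of $t$. This
already delivers one inequality
$$
k^{-1}\log B_{t,k}\geq \phi^t -C\frac{\log k}{k}.
$$
The task is then the reverse bound, and here I would invoke Theorem~6.1
of \cite{Berndtsson}, which controls how far the geodesic norms $H^t_k$
can lie below $H_{\phi^t,k}$ in the interior $0<t<1$. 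The pinching at
the endpoints forces the tangent vectors $A_k$ and $T_{t,k}$ to be
close, and a Gronwall-type comparison along the finite-dimensional
geodesic propagates the endpoint agreement $H^0_k=H_{\phi^0,k}$,
$H^1_k=H_{\phi^1,k}$ into a uniform interior bound $H^t_k\geq
H_{\phi^t,k}\,e^{-C\log k/k}$ in operator sense.

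\smallskip

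Converting this back through the monotonicity of $FS_k$ yields
$$
k^{-1}\log B_{t,k}\leq k^{-1}\log FS_k(H_{\phi^t,k})+C\frac{\log k}{k}
\leq \phi^t+C\frac{\log k}{k},
$$
and combining the two inequalities finishes the proof. The main
obstacle I expect is not the endpoint asymptotics, which are standard
Bergman-kernel estimates, but rather making the interior comparison
uniform: the geodesic $\phi^t$ is only $C^{1,1}$, so the constants in
the Bergman expansion a priori depend on higher derivatives one does
not control. The way around this is exactly the content of
Theorem~6.1 in \cite{Berndtsson}, where the loss is absorbed into the
single $\log k/k$ term using only the uniform $(1,1)$-bounds; the
present statement is then a repackaging of that theorem, so the real
work is checking that the constant stays uniform across $0\leq t\leq 1$
and that the monotonicity of $FS_k$ converts the operator-norm sandwich
cleanly into the pointwise potential estimate.
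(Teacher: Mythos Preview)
Your lower bound is on the right track: the monotonicity $H_{\phi^t,k}\geq H^t_k$ does give $B_{t,k}\geq B_{\phi^t,k}$ via the extremal characterization of Bergman kernels, exactly as in the paper. But you correctly identify and then do not resolve the regularity problem: the Tian--Yau--Zelditch expansion you want to apply to $FS_k(H_{\phi^t,k})$ requires smooth $\phi^t$, and the geodesic is only $C^{1,1}$, so uniform constants are not available that way. The paper avoids the full asymptotic expansion entirely: after twisting the weight to $(k-a)\phi^t+a\chi+\psi$ (with $\chi$ strictly positive on $\Omega\times X$), only a \emph{lower} bound $B_{\phi^t,k}\geq C e^{k\phi^t}$ is needed, and that follows from an Ohsawa--Takegoshi extension argument (Lemma~4.1) which uses nothing but a curvature lower bound. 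No higher derivatives of $\phi^t$ enter.

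The upper bound is where your argument has a genuine gap. You propose to get a reverse norm inequality $H^t_k\geq H_{\phi^t,k}\,e^{-C\log k/k}$ by a ``Gronwall-type comparison'' using the tangent pinching $T_{0,k}\leq A_k\leq T_{1,k}$. But that pinching does not say $A_k$ is close to either $T_{t,k}$; it only traps $A_k$ between them, and the gap between $T_{0,k}$ and $T_{1,k}$ is of order $k$, not $\log k$. No Gronwall mechanism produces the two-sided norm estimate you want, and invoking Theorem~6.1 of \cite{Berndtsson} here is circular: that theorem \emph{is} the present statement for $F=0$, and its proof does not proceed via a reverse norm inequality. The paper's upper bound uses a different idea: since $H^t_k$ is a finite-dimensional geodesic, one has explicitly $B_{t,k}=\sum e^{-t\lambda_j}|s_j|^2$, so $i\ddbar\log B_{t,k}\geq 0$ on $\Omega\times X$. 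After adding $a\chi$ to force strict positivity, $(k-a)k^{-1}(\log B_{t,k}-\tau)+a\chi$ is a \emph{subgeodesic} in $\H_L$. At the endpoints $t=0,1$ the metrics $\phi^0,\phi^1$ are smooth, so ordinary Bergman asymptotics apply there and give boundary values within $C\log k$ of $k\phi^t$. One then uses the envelope characterization of the Monge--Amp\`ere geodesic: $\phi^t$ is the supremum of all positively curved metrics with those boundary values, hence dominates the subgeodesic just constructed. That is what yields $k^{-1}\log B_{t,k}\leq \phi^t + Ck^{-1}\log k$.
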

This theorem strengthens the main result of Phong and Sturm,
\cite{Phong-Sturm},
who proved that
$$
\lim_{l\rightarrow\infty}\sup_{k\geq l} k^{-1}\log B_{t, k}=\phi^t
$$
almost everywhere. 

The final parts of this work (the most important parts!) were carried out 
during the conference on extremal K\"ahler metrics at BIRS June-July
2009. I am grateful to the organizers for a very stimulating
conference. I would also like to thank Jian Song for suggesting that
my curvature estimates might be relevant in connection with the
Chen-Sun theorem and for encouraging me to write down the details of
the proof of Theorem 1.2. Finally I am grateful to Xiuxiong Chen and
Song Sun for explaining me their result. 

\section{Background and definitions}

In the first subsection we will give basic facts about the space
$\H_L$ and its finite dimensional 'quantizations'. Since this material
is well known (see e g  \cite{Donaldson1}, \cite{Phong-Sturm} or
\cite{Chen-Sun}) we will 
be brief and emphazise a few particularities that are relevant for
this paper.

\subsection{ $\H_L$, $ \H_k$ and its variants.}

\bigskip

Let $L$ be an ample line bundle over the compact manifold $X$. $\H_L$
is the space of all smooth metrics $\phi$ on $L$ with
$$
\omega^\phi:= i\ddbar\phi >0.
$$
$\H_L$ is an open subset of an affine space and its  tangent space at
each point equals the space of smooth real valued functions on
$X$. The Riemannian norm on this tangent space at the point $\phi$ is
the $L^2$-norm
$$
\|\psi\|^2 = \int_X |\psi|^2 \omega^\phi_n/Vol
$$
(remember we use the notation $\omega_n=\omega^n/n!$ for forms of
degree two). A geodesic in $\H_L$ is a curve $\phi^t$ for $a<t<b$ that
satisfies the geodesic equation
\be
\frac{d^2}{dt^2}\phi^t =|\dbar\frac{d}{dt}\phi^t|^2_{\omega^{\phi^t}}.
\ee
It is useful to extend the definition of $\phi^t$ to complex values of
$t$ in the strip
$$
\Omega=\{t; a<\Re t<b\}
$$
by taking it to be independent of the imaginary part of $t$. Then 2.1
can be written equivalently on complex form
$$
c(\phi^t):= \phi^t_{t \bar t} -|\dbar\p|^2_{\omega^{\phi^t}}=0,
$$
where $\p=\partial\phi^t /\partial t $. On the other hand the
expression $c(\phi^t)$ is related to the Monge-Ampere operator through
the formula
$$
c(\phi^t) idt\wedge d\bar t\wedge \omega^{\phi^t}_n =
(i\ddbar\phi^t)_{n+1},
$$
where on the right hand side we take the $\ddbar$-operator on
$\Omega\times X$. Geodesics in $\H_L$ are therefore given by solutions
to the homogenuous Monge-Ampere equation that are independent of $\Im
t$. Notice that a geodesic will automatically satisfy
$$
i\ddbar\phi^t\geq 0,
$$
and we shall refer to any  curve with this property as a 'subgeodesic'
even though this 
term has no meaning in Riemannian geometry in general.

A fundamental theorem of Chen, \cite{Chen} says that if $\phi^0$ and
$\phi^1$ are two points in $\H_L$ they can be connected by a geodesic
of class $C^{ 1,1}$, i e such that
$$
(i\ddbar\phi^t)^{n+1}=0
$$
and
$$
\ddbar \phi^t
$$
has bounded coefficients.

One associates with $\H_L$ the vector spaces
$$
V_k:= H^0(X, kL)
$$
of global holomorphic sections of $kL$ for $k$ positive integer. 
A metric $\phi$ in $\H_L$ is mapped to a hermitian norm $Hilb_k(\phi)$
on $V_k$ by
$$
\|u\|^2_{Hilb_k(\phi)} := \int_X |u|^2 e^{-k\phi}\omega^\phi_n.
$$
It will also be useful for us to consider the vector spaces 
$$
H^0(X, K_X+kL).
$$
A metric $\phi$ on $L$ also induces an hermitian norm, $H_{k\phi}$  on
these spaces through 
$$
\|u\|^2_{H_{k\phi}}:=\int_X |u|^2 e^{-k\phi}.
$$
An important point is that $|u|^2 e^{-k\phi}$ is a measure on $X$ if
$u$ lies in $H^0(X, K_X+kL)$, so the integral of this expression is
naturally defined, without the introduction of any extra measure like
$\omega^\phi_n$.

In order to treat both these types of spaces simultaneously we let $F$ be an
arbitrary line bundle over $X$ and consider spaces
$$
H^0(X, K_X+kL +F).
$$
Norms on these spaces are then defined by
$$
\|u\|^2_{H_{k\phi +\psi}}:=\int_X |u|^2 e^{-k\phi -\psi},
$$
where $\psi$ is some metric on $F$. The two cases we discussed earlier
the correspond to $F=-K_X$ and 
$$
\psi =-\log \omega^\phi_n,
$$
and $F=0$ respectively. In the first case
$$
H_{k\phi +\psi}=Hilb_{k(\phi)}
$$
as defined above. 

Let now $V$ be any space of sections to some line bundle, $G$, over $X$; it
may be any of the choices discussed above, and denote by $\H_V$ the
space of hermitian norms  on $V$.  For such a hermitian norm, $H$, let
$s_j$ be an orthonormal basis for the space of sections $H^0(X, G)$,
and consider the Bergman kernel
$$
B_H=\sum |s_j|^2.
$$
The absolute values on the right hand side here are to be interpreted
with respect to some trivialization of $G$. When the trivialization
changes, $\log B_H$ transforms like a metric on $G$ since
$$
|u|^2/B_H
$$
is a well defined function if $u$ is a section of $G$. By definition
$FS(H)$ is that metric
$$
FS(H)=\log B_H.
$$
By the well known extremal characterization of Bergman kernels we have
$$
B_H(x)=\sup_{u\in H^0(X,G)} \frac{|u(x)|^2}{\|u\|^2_H}.
$$
From this we can conclude that the Bergman kernel is a decreasing
function of the metric; if we change the metric to a larger one, the
Bergman kernel becomes smaller.

Choosing a basis for $V$ we can
represent an element in $\H_V$ by a matrix that we slightly abusively
also call $H$. A curve in $\H_V$ then
gets represented by a curve of matrices $H^t$. Differentiating norms
we get
$$
\frac{d}{dt}\|u\|^2_{H^t}=\langle A_t u, u\rangle_{H^t},
$$
with
$$
A_t = (H^t)^{-1}\frac{d}{dt} H^t.
$$
 $A_t$  is an
endomorphism of $V$; the tangent vector to the curve $H^t$. Its norm
is 
$$
\|A_t\|^2 = tr A^* A.
$$
Here the * stands for the adjoint with respect to $H$, but since $A$
is selfadjoint for this scalar product, the norm of $A$ is the sum of
the squares of its eigenvalues.

Finally,  the geodesic equation is
$$
\frac{d}{dt} A_t =0.
$$

It is easy to see that any two norms in $\H_V$ can be joined by a
geodesic. Explicitly, we can find a basis $s_j$ of $V$ which is
orthonormal w r t $H^0$ and diagonalizes $H^1$ with eigenvalues
$e^{\lambda_j}$.
 The geodesic is then represented (in this basis) by the diagonal
 matrix $H^t$ with eigenvalues $e^{t\lambda_j}$. Hence, $A=A_t$ is
 diagonalized by the same basis and has eigenvalues $\lambda_j$. 

\bigskip

Just like in the case of $\H_L$ it is convenient to consider curves
$H^t$ defined also for complex values of $t$  in the strip $\Omega$,
by letting it be independent of the imaginary part of $t$. We can then
write the geodesic equation equivalently as
$$
\frac{\partial}{\partial\bar t} H^{-1}\frac{\partial}{\partial t} H.
$$
This suggests that the geodesic equation can be thought of as the
zero-curvature equation for a certain 
vector bundle. Let $E$ be the trivial bundle over $\Omega$ with fiber
$V$. A curve in $\H_V$ is then the same thing as a vector bundle
metric on $E$, independent of the imaginary part of $t$, and we see
that geodesics correspond to flat metrics on $E$. In analogy with the
case of curves in $\H_L$, we will call curves in $\H_V$ that
correspond to vector bundle metrics of semipositive curvature 
'subgeodesics' in $\H_V$. 

\bigskip

A main role in the sequel is played by Theorem 2.1 in
\cite{Berndtsson}. This theorem implies that if $\phi^t$ is a subgeodesic in
$\H_L$ (it does not need to be independent of $\Im t$), i e 
satisfies
$$
i\ddbar \phi^t \geq 0,
$$
then the induced curve $H_{\phi^t}$ in $\H_V$ for $V=H^0(X,K_X+L)$ has
semipositive curvature, so it is a subgeodesic in $\H_V$. Since
metrics with semipositive curvature lie above flat metrics having the
same boundary values, this 
gives us a way of comparing 
$L^2$-norms on $V$ induced by (sub)geodesics in $\H_L$ to finite dimensional
geodesics in $\H_V$ (cf Proposition 3.1).  

\subsection{ Measures defined by geodesics.}

Let us start with the case of a finite dimensional geodesic, $H^t$,  in
$\H_V$. As we have seen in the previous subsection it can be
represented by a diagonal matrix with diagonal elements
$e^{t\lambda_j}$ in a suitable basis, and its tangent vector $A$ is
then diagonal with diagonal elements $\lambda_j$. The measure we
associate to the geodesic is then the (normalized) spectral measure of
$A$
$$
\nu_A=\frac{1}{d}\sum \delta_{\lambda_j},
$$
with $d$ the dimension of $V$. This is defined in terms of
eigenvalues of the endomorphism $A$ so it does not depend on the basis we
have chosen. 

\bigskip

\noindent Recall that for any pair of norms in $\H_V$, Donaldson
\cite{Donaldson2} has defined a quantity
$$
Z(H^1, H^0)= \log\frac{\det H^1}{\det H^0}
$$
(the determinant is the determinant of a matrix representing the norm
in some basis, but since we consider quotients of determinants, $Z$
does not depend on which basis). Then
$$
\frac{d}{dt} Z(H^t,H^0)= tr A.
$$
Hence we see that, since $A$ is constant and we have chosen our
parameter interval  to be $[0,1]$, that
$$
\int_\R x d\nu_A = tr A/d =Z(H^1, H^0)/d
$$
so first moments of the spectral measure gives the Donaldson
$Z$-functional. Second order moments are
$$
\int_\R x^2 d\nu_A = tr A^2/d =\|A\|^2/d
$$
which in the same way equals the square of the geodesic distance from
$H^0$ to $H^1$, again divided by $d$.

We next turn to the corresponding construction for $\H_L$. Let
$\phi^t$ be a curve in $\H_L$ and to fix ideas we think of $t$ as real
now. We first assume that $\phi^t$ is smooth and denote by
$$
\p=\frac{d\phi^t}{dt}
$$
the tangent vector (a smooth function on $X$). For ease of notation we
also set
$$
\omega^t=\omega^{\phi^t}.
$$
\begin{lma} Let $f$ be a compactly supported function on $\R$ of class
  $C^1$. Then
$$
\frac{d}{dt}\int_X f(\p)\om =\int_X f'(\p)c(\phi^t)\om.
$$

\end{lma}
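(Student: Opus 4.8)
The plan is to differentiate under the integral sign and then integrate by parts, the only two sources of $t$-dependence in $\int_X f(\p)\om$ being the composition $f(\p)$ and the volume form $\om=\omega^t_n$. Since $f$ is $C^1$ with compact support, $\phi^t$ is smooth jointly in $(t,x)$, and $X$ is compact, differentiation under the integral sign is justified by a dominated-convergence argument on the compact slab $X\times[t_0-\epsilon,t_0+\epsilon]$. By the chain rule (this is precisely where the hypothesis $f\in C^1$ enters) and the Leibniz rule,
\[
\frac{d}{dt}\int_X f(\p)\,\om = \int_X f'(\p)\,\ddot{\phi^t}\,\om + \int_X f(\p)\,\frac{\partial}{\partial t}\om ,
\]
where $\ddot{\phi^t}$ is the second $t$-derivative of $\phi^t$.

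For the second term I would differentiate the volume form. Since $\omega^t = i\ddbar\phi^t$ on $X$, we have $\partial_t\omega^t = i\ddbar\p$, and therefore
\[
\frac{\partial}{\partial t}\om = \frac{\partial}{\partial t}\frac{(\omega^t)^n}{n!} = i\ddbar\p\wedge\omega^t_{n-1},
\]
with $\omega^t_{n-1}=(\omega^t)^{n-1}/(n-1)!$. The heart of the argument is then to integrate the resulting term $\int_X f(\p)\,i\ddbar\p\wedge\omega^t_{n-1}$ by parts. Applying Stokes on the closed manifold $X$ to $f(\p)\,i\dbar\p\wedge\omega^t_{n-1}$, and using that $\omega^t$ is closed (so $\omega^t_{n-1}$ is closed) together with the vanishing of the $\dbar f(\p)\wedge\dbar\p$ contribution for bidegree reasons, I obtain
\[
\int_X f(\p)\,i\ds\dbar\p\wedge\omega^t_{n-1} = -\int_X f'(\p)\,i\ds\p\wedge\dbar\p\wedge\omega^t_{n-1},
\]
the chain rule $\ds f(\p)=f'(\p)\ds\p$ producing the factor $f'(\p)$.

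To finish I would invoke the standard pointwise identity $i\ds\p\wedge\dbar\p\wedge\omega^t_{n-1}=|\dbar\p|^2_{\omega^t}\,\om$, valid since $\p$ is real. Substituting back gives
\[
\frac{d}{dt}\int_X f(\p)\,\om = \int_X f'(\p)\big(\ddot{\phi^t}-|\dbar\p|^2_{\omega^t}\big)\,\om = \int_X f'(\p)\,c(\phi^t)\,\om,
\]
since $\ddot{\phi^t}-|\dbar\p|^2_{\omega^t}$ is exactly the geodesic-defect expression $c(\phi^t)$ (the real form of the geodesic equation 2.1, equivalent to the complex $\phi^t_{t\bar t}-|\dbar\p|^2_{\omega^t}$). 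I expect the main obstacle to be the integration-by-parts step: one must check that after Stokes the only surviving term is the one displayed, which rests on the closedness of $\omega^t$ and on the type-vanishing of the antiholomorphic cross term. The accompanying linear-algebra identity relating $i\ds\p\wedge\dbar\p\wedge\omega^t_{n-1}$ to $|\dbar\p|^2_{\omega^t}\,\om$ is routine but should be recorded explicitly, since it is what converts the boundary term into the curvature-type quantity appearing in $c(\phi^t)$.
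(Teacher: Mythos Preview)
Your proof is correct and follows essentially the same route as the paper's: differentiate under the integral to get the two terms $\int_X f'(\p)\ddot{\phi^t}\om$ and $\int_X f(\p)\,i\ddbar\p\wedge\omega^t_{n-1}$, then apply Stokes to the second term and identify the result with $c(\phi^t)$. You have simply been more explicit than the paper about the justification of differentiation under the integral, the bidegree reasons behind the Stokes computation, and the pointwise identity $i\partial\p\wedge\dbar\p\wedge\omega^t_{n-1}=|\dbar\p|^2_{\omega^t}\,\om$.
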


\begin{proof} This is just a simple computation.
$$
\frac{d}{dt}\int_X f(\p)\om = \int f'(\p)\frac{d^2\phi^t}{dt^2}\om
+\int_X f(\p)i\ddbar\p\wedge \omega^t_{n-1}.
$$
By Stokes' theorem applied to the last term this equals
$$
 \int f'(\p)\frac{d^2\phi^t}{dt^2}\om
-\int_X f'(\p)i\partial\p\wedge\bar\partial\p\wedge \omega^t_{n-1}
=\int_X f'(\p)c(\phi^t)\om.
$$
\end{proof}

Since for smooth geodesics $c(\phi^t)=0$ it follows that the integrals
$$
\int_X f(\p)\om
$$
do not depend on $t$. By approximation we can draw the same conclusion
for (say) geodesics of class $C^1$.

\begin{prop}
Let $\phi^t$ be a curve of metrics on $L$ with semipositive curvature
which is of class $C^1$ and satisfies
$$
(i\ddbar\phi^t)^{n+1}=0
$$
in the sense of currents. Then the integrals
$$
\int_X f(\p)\om
$$
do not depend on $t$.

\end{prop}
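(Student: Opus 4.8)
The plan is to deduce the $C^1$ statement from the smooth case treated in the preceding lemma by a regularization argument, the point being that $c(\phi^t)$ itself need not make sense pointwise for a merely $C^1$ curve, whereas the associated Monge--Amp\`ere mass $(i\ddbar\phi^t)^{n+1}$ on $\Omega\times X$ does, and it vanishes by hypothesis. First I would rewrite the lemma in integrated form: for a \emph{smooth} curve $\psi^t$ and $t_0<t_1$,
\[
\int_X f(\dot\psi^{t})\,\omega^{\psi^t}_n\Big|_{t=t_0}^{t=t_1}
=\int_{t_0}^{t_1}\!\int_X f'(\dot\psi^{t})\,c(\psi^t)\,\omega^{\psi^t}_n\,dt .
\]
Using the identity $c(\psi^t)\,idt\wedge d\bar t\wedge\omega^{\psi^t}_n=(i\ddbar\psi^t)_{n+1}$ relating $c$ to the Monge--Amp\`ere operator on $\Omega\times X$, the right-hand side is, up to a fixed positive constant, the integral of the continuous function $f'(\dot\psi^{t})$ against the total Monge--Amp\`ere measure $(i\ddbar\psi^t)^{n+1}$ over $[t_0,t_1]\times X$. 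For a smooth geodesic this measure vanishes, recovering constancy; the goal is to run this identity for the given $C^1$ curve in a limiting form.

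Next I would approximate. Choose smooth curves $\phi^t_\epsilon$ of metrics on $L$ with $i\ddbar\phi^t_\epsilon>0$, so that $\omega_\epsilon:=i\ddbar\phi^t_\epsilon$ has smooth volume form $(\omega_\epsilon)_n$, such that $\phi_\epsilon\to\phi$ in $C^1$ on $[t_0,t_1]\times X$; concretely one may mollify $\phi$ and add a small multiple of a fixed positively curved reference metric to restore strict positivity, shrinking the correction as $\epsilon\to 0$. Applying the integrated identity to $\phi_\epsilon$ gives
\[
\int_X f(\dot\phi^{t}_\epsilon)\,(\omega_\epsilon)_n\Big|_{t=t_0}^{t=t_1}
=C\!\int_{[t_0,t_1]\times X} f'(\dot\phi_\epsilon)\,(i\ddbar\phi_\epsilon)^{n+1},
\]
with $C>0$ fixed. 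Since $f,f'$ are continuous with compact support and $\dot\phi_\epsilon\to\dot\phi$ uniformly, the functions $f(\dot\phi_\epsilon)$ and $f'(\dot\phi_\epsilon)$ converge uniformly to $f(\p)$ and $f'(\p)$.

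Finally I would pass to the limit $\epsilon\to 0$. On the left, for each fixed $t$ the measures $(\omega_\epsilon)_n$ converge weakly to $\omega^{\phi^t}_n=\om$ with uniformly bounded total mass (all equal to $\mathrm{Vol}$), so that $\int_X f(\dot\phi_\epsilon)\,(\omega_\epsilon)_n\to\int_X f(\p)\om$; on the right, $(i\ddbar\phi_\epsilon)^{n+1}\to(i\ddbar\phi)^{n+1}=0$ weakly on $[t_0,t_1]\times X$, so the whole right-hand side tends to $0$. This yields $\int_X f(\p)\om\big|_{t_0}^{t_1}=0$, i.e.\ independence of $t$. The main obstacle is precisely the continuity of the Monge--Amp\`ere operator under these approximations: one must invoke the Bedford--Taylor convergence theorem, which guarantees weak convergence of the wedge products $(i\ddbar\,\cdot\,)$ when uniformly bounded plurisubharmonic potentials converge uniformly. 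Arranging the smooth approximants $\phi_\epsilon$ so that this weak convergence of the Monge--Amp\`ere masses and the $C^1$ convergence of $\dot\phi_\epsilon$ hold simultaneously, while preserving semipositivity, is the delicate point; everything else is the elementary computation of the lemma together with the hypothesis $(i\ddbar\phi)^{n+1}=0$.
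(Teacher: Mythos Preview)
Your proposal is correct and follows essentially the same approach as the paper: approximate $\phi^t$ by smooth plurisubharmonic metrics $\phi^t_\epsilon$ on a compact of $\Omega\times X$, apply the lemma to the smooth approximants, and pass to the limit using that the total Monge--Amp\`ere mass $\int (i\ddbar\phi^t_\epsilon)^{n+1}$ tends to zero. The paper is slightly more terse---it keeps only semipositivity rather than forcing strict positivity, and it is explicit that the mollification is done locally and patched via a partition of unity (the $C^1$ hypothesis is what makes this patching harmless)---but the logical structure is identical to yours.
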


\begin{proof} Let $K$ be a compact in $\Omega$. We can then
  approximate $\phi^t$ over $K\times X$ by smooth metrics
  $\phi^t_\epsilon$ such that
$$
i\ddbar\phi^t_\epsilon \geq 0
$$
and
$$
\int_{K\times X}(i\ddbar\phi^t_\epsilon)^{n+1}
$$
tends to 0. In fact, the approximation can be carried out locally by
convolution and then patched together with a partition of unity - the
patching causes no problem if the initial metric is of class
$C^1$. The proposition then follows from the lemma.
\end{proof}

For a $C^1$-geodesic we now consider the normalized volume measures on
$X$
$$
dV_t=\om/Vol
$$
where  
$$
Vol=\int_X c(L)_n
$$
is the volume of $X$, and their direct image measures under the map
$-\p$
$$
d\mu_t= (-\p)_*(dV_t).
$$ 
These are probability measures on $\R$, supported on a compact
interval $[-M, M]$ , $M=\sup|\p|$ and concretely defined by
$$
\int_{\R} f(x)d\mu_t(x)=\int_X f(-\p)\om/Vol.
$$
By the proposition, they do in fact not depend on $t$, so
$d\mu=d\mu_t$ is a fixed probability measure on $\R$ associated to the
given geodesic. 

\bigskip

\noindent Recall that the Aubin-Yau energy of a pair of metrics in
$\H_L$ is defined in the following way:
$$
\frac{d}{dt}\E(\phi^t, \phi^0) = -\int_X \p \omega^t_n,
$$
and $\E(\phi^0, \phi^0)=0$. From this we see that the first order
moment of $d\mu$ 
$$
\int x d\mu(x)= -\int_X \p \omega^t_n/Vol,
$$
is preciseley the derivative of the Aubin-Yau energy, which is
constant for a geodesic, and hence equal to the Aubin-Yau energy
itself if the parameter interval is $(0,1)$. This corresponds to the
relation between the measures $d\nu_k$ 
and the Donaldson $Z$-functional, and Theorem 1.1 in this case is just
the familiar convergence of the $Z$-functionals to the Aubin-Yau
energy. Similarily, the second order moments
$$
\int x^2 d\mu(x)= \int_X (\p )^2\omega^t_n/Vol,
$$
is the length of the tangent vector to $\phi^t$ squared, so second
order moments give geodesic distances. Notice finally that the
proposition implies that all $L^p$-norms of $\p$ are constant along
the curve, hence also the $L^\infty$-norm. More precisley, since $\sup
(-\p)$ is the supremum of the support of $\mu$ it follows that
$\inf\p$ (and $\sup\p$) are constant (where we mean {\it essential} sup
and inf).

\bigskip

\noindent{\bf Remark} Notice also that if we define the measures in the same
way when $\phi^t$ is a subgeodesic, then the integrals
$$
\int_{\R} f(x)d\mu_t(x)
$$
increase with $t$ if $f$ is an increasing function. Intuitively, the
measures $\mu_t$ move to the right as $t$ increases.

\section{The convergence of spectral measures}
We first state a consequence of the main result from
\cite{Berndtsson}. In the statement of the proposition we shall use
the notation
$$
\|u\|^2_{H_\phi} =\int_X |u|^2 e^{-\phi}
$$
for the hermitian norm on $H^0(X, L+K_X)$ defined by a metric $\phi$
on $L$.

\begin{prop} Let $L$ be an ample line bundle over $X$ and let $\phi^t$
  for $t=0, 1$ be two elements of $\H_L$. Let for $t=0, 1$  $H^t$ be
  the norms $H_{\phi^t}$ on  $H^0(X, L+K_X)$ defined by $\phi^0$ and
  $\phi^1$. Let for $t$ between 0 and 1 $H^t$ be the geodesic in the
  space of metrics on  $H^0(X, L+K_X)$ joining $H^0$ and $H^1$. Let
  finally $\phi^t$ be any smooth subgeodesic in $\H_L$ connecting
  $\phi^0$ and $\phi^1$, i e any  metric with nonnegative
  curvature on $L$ over $X \times \Omega$, smooth up to the boundary. Then
\be
H^t \leq H_{\phi^t}.
\ee
\end{prop}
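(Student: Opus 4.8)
The plan is to reduce the estimate to two ingredients and then combine them by a maximum principle. First, note that the hypothesis on $\phi^t$ says exactly that it is a smooth subgeodesic in $\H_L$: $i\ddbar\phi^t\ge 0$ on $X\times\Omega$, smooth up to the boundary. By Theorem 2.1 of \cite{Berndtsson} the induced curve of $L^2$-norms $t\mapsto H_{\phi^t}$ on $V=H^0(X,K_X+L)$ then has semipositive (in fact Nakano) curvature as a metric on the trivial bundle $E=\Omega\times V$; in the terminology of Section 2 it is a subgeodesic in $\H_V$. Second, by construction $H^t$ is the geodesic in $\H_V$ with the same endpoints $H_{\phi^0},H_{\phi^1}$, so it corresponds to a flat metric on $E$, and $H^t=H_{\phi^t}$ on $\partial\Omega$. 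Thus everything comes down to the comparison principle that a metric of semipositive curvature lies above the flat metric sharing its boundary values.

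To prove this comparison I would pass to the dual bundle $E^*=\Omega\times V^*$. Fibrewise, $H^t\le H_{\phi^t}$ is equivalent to the reversed inequality $H_{\phi^t}^*\le (H^t)^*$ for the dual metrics, so it suffices to bound $\|\xi\|^2_{H_{\phi^t}^*}$ from above by $\|\xi\|^2_{(H^t)^*}$ for every $\xi\in V^*$. Since $H_{\phi^t}$ is Griffiths semipositive its dual $H_{\phi^t}^*$ is Griffiths seminegative, and for a seminegatively curved bundle $\log\|\sigma\|^2$ is subharmonic in $t$ for every local holomorphic section $\sigma$ of $E^*$; because the metrics are independent of $\Im t$ this is a convex function of $s=\Re t\in[0,1]$.

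The decisive point is choosing the right holomorphic section. Fix an interior point $t_0$ and a vector $\xi\in V^*$, and let $\tilde\sigma$ be the holomorphic section of $E^*$ which is parallel for the flat (geodesic) metric $(H^t)^*$ and satisfies $\tilde\sigma(t_0)=\xi$; such a section exists because $(H^t)^*$ is flat and $\Omega$ is simply connected, and along it $\|\tilde\sigma\|^2_{(H^t)^*}$ is a constant $c$. The function $s\mapsto\log\|\tilde\sigma\|^2_{H_{\phi^t}^*}$ is convex, and on $\partial\Omega$ it equals $\log c$ because the two metrics agree there; a convex function on $[0,1]$ with equal boundary values lies below them, so at $t_0$ we obtain $\|\xi\|^2_{H_{\phi^{t_0}}^*}=\|\tilde\sigma\|^2_{H_{\phi^{t_0}}^*}\le c=\|\xi\|^2_{(H^{t_0})^*}$. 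Since $\xi$ and $t_0$ were arbitrary this gives $H_{\phi^t}^*\le(H^t)^*$, i.e. $H^t\le H_{\phi^t}$.

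I expect the main obstacle to be precisely this comparison step rather than the application of \cite{Berndtsson}. The naive attempt of comparing $\log\|\sigma\|^2$ for a fixed constant section under the two metrics fails, because $\log\|\sigma\|^2$ is convex for both the flat and the positively curved metric, so their common boundary values yield no information. What makes the maximum principle bite is to test against sections that are flat for the geodesic, along which the geodesic norm is genuinely constant. Two further points need care: the curvature positivity furnished by \cite{Berndtsson} must be used in a form strong enough to dualize, for which Griffiths positivity suffices and Nakano implies Griffiths; and the smoothness of $\phi^t$ up to the boundary is what guarantees that $H_{\phi^t}$ is an honest smooth metric, so that both the curvature computation and the boundary matching are legitimate.
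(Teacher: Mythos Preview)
Your proof is correct and follows exactly the paper's approach: invoke Theorem~2.1 of \cite{Berndtsson} to get semipositive curvature of $H_{\phi^t}$, observe that the geodesic $H^t$ is flat, and then apply a comparison principle between a flat metric and a semipositively curved one with the same boundary values. The only difference is that the paper simply cites this comparison principle from \cite{Rochberg} and \cite{Semmes}, whereas you supply a direct proof of it via dualization and the subharmonicity of $\log\|\tilde\sigma\|^2$ along a section parallel for the flat metric; your argument is essentially a self-contained proof of that cited lemma in the present setting.
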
 

\begin{proof} If we regard $H^t$ and $H_{\phi^t}$ as vector bundle
  metrics on the trivial vector bundle over $\Omega$ with fiber 
  $H^0(X, L+K_X)$, then Theorem 2.1 of \cite{Berndtsson} implies that
  the second of these metrics has nonnegative curvature. On the other
  hand the first metric has zero curvature since $H^t$ is a geodesic
  . Since the two metrics agree over the boundary a comparison lemma
  from \cite{Rochberg} or \cite{Semmes} gives inequality 3.1.
\end{proof}

We have been a little bit vague about what 'smoothness' means in the
proposition. The proof of Theorem 2.1 in \cite{Berndtsson} requires at
least $C^2$-regularity, but we claim that $C^1$ regularity is
sufficient in the proposition, which can be seen from regularization
of the metric (this can be done locally with the aid of a partition of
unity in the case that the metric is $C^1$ from the start). This means
that we can (and will) apply the proposition to  Monge-Ampere
geodesics of class $C^{1,1}$.

The next step is to differentiate the inequality 3.1 for $t= 0, 1$
(recall that equality holds at the endpoints). If $u$ lies in $H^0(X, L+K_X)$ 
we get
$$
\frac{d}{dt} \|u\|^2_{H^t}=\langle A_t u,u\rangle_{H^t},
$$
where
$$
A_t = (H^t)^{-1} \dot{H^t}.
$$
Since $H^t$ is a geodesic, $A_t=A$ is independent of $t$. The
derivative of the right hand side of 3.1 is
$$
\frac{d}{dt} \|u\|^2_{H_{\phi^t}}=\langle T_t u,u\rangle_{H_{\phi^t}},
$$
where $T_t$ is the Toepliz operator on  $H^0(X, L+K_X)$ defined by
$$
\langle T_t u,u\rangle_{H_{\phi^t}}=-\int_X \p |u|^2 e^{-\phi^t}.
$$
The proposition then implies that
\be
T_0\leq A
\ee
as operators on the space $H^0(X, L+K_X)$ equipped with the Hilbert
norm $H^0$ and 
\be
A\leq T_1
\ee
as operators on the space $H^0(X, L+K_X)$ equipped with the Hilbert
norm $H^1$.

We are now going to apply these estimates to multiples $kL$ of the
bundle $L$, but in order to accomodate also $L^2$-metrics of the form
$$
\int_X |u|^2 e^{-k\phi} \omega^\phi_n
$$
we need to generalize the set up first. Let therefore $F$ be an
arbitrary line bundle over $X$ and consider line bundles of the form
$$
K_X+F+kL.
$$
The main examples will be $F=0$ and $F=-K_X$, and the reader may find
it convenient to focus on the case $F=0$ first, in which case the
argument below is easier, at least notationally. Put now
$$
V_k=H^0(X, kL+F+K_X).
$$

Fix two metrics $\phi^0$ and $\phi^1$ in $\H_L$.
Let $\chi$ be some fixed metric on $L$ considered as a bundle over
$X\times\Omega$, which has positive curvature bounded from below by a
positive constant ( times say $\omega^{\phi^0}+ idt\wedge d\bar t$),
and which equals $\phi^0$ for $\Re t =0$ and equals $\phi^1$ for $\Re
t=1$. Such a metric $\chi$ can be found on the form
$$
t\phi^1 +(1-t)\phi^0 +\kappa(\Re t)
$$
where $\kappa$ is a sufficiently convex function on the interval
$(0,1)$ which equals 0 at the endpoints. 

Let also $\psi$ be an arbitrary metric on $F$, not necessarily with
positive curvature, but smooth up to the boundary. Choose a fixed
positive constant $a$, sufficiently large so that
$$
ai\ddbar\chi +i\ddbar\psi\geq 0.
$$

We next consider
the vector spaces 
$$
H^0(X, K_X+F+kL)
$$
with the induced $L^2$-metrics
$$
\|u\|^2_{k,t}:=\int_X |u|^2 e^{-(k-a)\phi -a\chi-\psi}.
$$
Notice that the metric on the line bundle $F+kL$ that we use here,
$(k-a)\phi +a\chi+\psi$ has been chosen so that it has nonnegative
curvature, meaning that we can apply the results from 3.1, 3.2 and 3.3.
We denote the Toepliz operators arising from differentiation of the
norms at $t=0$ and $t=1$ by $T_{0,k}$ and $T_{1,k}$ now in order to
keep track on how they depend on $k$. By immediate calculation
\be
\langle T_{k,t} u, u\rangle_{ k, t}=
-\int_X [(k-a)\p+ a\dot{\chi}+\dot{\psi}] |u|^2 e^{-(k-a)\phi -a\chi-\psi}
\ee
for $t=0, 1$. 

\bigskip

Let now $H^t_k$ be the finite dimensional geodesic in the space of
hermitian norms on $H^0(X, K_X+F+kL)$ that connects $\|\cdot\|_{k,t}$
for $t=0$ and $t=1$. Let 
$$
A_k= (H^t_k)^{-1}\frac{d}{dt}H^t_k
$$
be the tangent vector of the finite dimensional geodesic. By 3.2 and
3.3 we have the inequalities
\be
T_{0,k}\leq A_k
\ee
with respect to the hermitian scalar product $H^0_k$ and
\be
T_{1,k}\geq A_k
\ee
with respect to the hermitian scalar product $H^1_k$.  Let $\lambda_j(k)$ be the
eigenvalues of $A_k$ arranged in increasing order, and let
$\tau_j^t(k)$ be the eigenvalues of the two Toepliz operators, also
arranged in increasing order. We then get immediately from 3.5 and 3.6 
that
\be
\tau^0_j(k)\leq\lambda_j(k)\leq\tau^1_j(k).
\ee

\bigskip

The final step in the argument is the following theorem on the
asymptotics of Toepliz operators; it is a variant of a theorem of
Boutet de Monvel, \cite{B-G}. Since the theorem is essentially known,
we defer its proof to an appendix.

\begin{thm} Let $L$ and $F$ be line bundles over $X$ with smooth
  metrics $\phi$ and $\psi$ respectively. Assume that $\phi$ has
  strictly positive curvature. Let $\xi$ and $\xi_k$ be  continuous
  real valued functions on $X$ with $\xi_k$ tending uniformly to
  0. Define  Toepliz operators with 
  symbols $\xi + \xi_k$ on the
  spaces
$$
H^0(X, K_X+kL+F)
$$
by
$$
\langle T_k u,u\rangle_{k\phi+\psi}= \int (\xi + \xi_k)|u|^2 e^{-k\phi-\psi}.
$$
 Let $\mu_k$ be the normalized spectral measure of $T_k$.

Then the sequence $\mu_k$ converges weakly to the measure 
$$
\mu= \xi_*(\omega^\phi_n/Vol),
$$
the direct image of the normalized volume element on $X$ defined by
$\omega^\phi$ under the map $\xi$.
\end{thm}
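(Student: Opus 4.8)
The plan is to prove the weak convergence $\mu_k \to \mu$ by the method of moments. Since $\xi$ is continuous on the compact manifold $X$ and $\xi_k\to 0$ uniformly, the symbols $\xi+\xi_k$ are uniformly bounded, say $\sup_k\sup_X|\xi+\xi_k|\le M$; hence every eigenvalue of the self-adjoint operator $T_k$ lies in $[-M,M]$, and all the measures $\mu_k$, as well as the limit $\mu=\xi_*(\omega^\phi_n/Vol)$, are supported in the fixed compact interval $[-M,M]$. On such an interval a probability measure is determined by its moments, and polynomials are uniformly dense in the continuous functions, so it suffices to show that for each integer $m\ge 0$
$$
\int_\R x^m\,d\mu_k=\frac{1}{d_k}\operatorname{tr}(T_k^m)\longrightarrow \int_X \xi^m\,\omega^\phi_n/Vol=\int_\R x^m\,d\mu,
$$
where $d_k=\dim H^0(X,K_X+kL+F)$ and the first equality is the definition of the normalized spectral measure.

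I would first reduce the symbol to a fixed smooth function. Writing $T_{k,g}$ for the Toeplitz operator with symbol $g$, the perturbation is harmless: $\|T_{k,\xi_k}\|\le \sup_X|\xi_k|\to 0$, so $T_k=T_{k,\xi}+T_{k,\xi_k}$ and, since all the operators are uniformly bounded, a telescoping estimate gives $\|T_k^m-T_{k,\xi}^m\|\to 0$. Next, approximating $\xi$ uniformly by smooth functions $\xi_\epsilon$ and using the elementary bound $\|T_{k,\xi}-T_{k,\xi_\epsilon}\|\le \sup_X|\xi-\xi_\epsilon|$, which is uniform in $k$, one reduces the whole problem to the case of a \emph{smooth} symbol $\xi$.

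The core of the argument is the asymptotic evaluation of $d_k^{-1}\operatorname{tr}(T_{k,\xi}^m)$, and it rests on two known inputs. The first is the on-diagonal Bergman asymptotics of Bouche, Tian, Catlin and Zelditch: if $\rho_k=\sum_j|s_j|^2 e^{-k\phi-\psi}$ denotes the Bergman measure (so that $\int_X\rho_k=d_k$), then $d_k^{-1}\rho_k\to \omega^\phi_n/Vol$, the factor $k^n$ and the universal constants cancelling upon normalization; consequently, for any continuous $h$ one has $d_k^{-1}\operatorname{tr}(T_{k,h})=d_k^{-1}\int_X h\,\rho_k\to \int_X h\,\omega^\phi_n/Vol$. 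The second input is the Toeplitz product rule $\|T_{k,f}T_{k,g}-T_{k,fg}\|\to 0$ for smooth $f,g$, which is the variant of Boutet de Monvel's theorem and follows from the rapid decay of the Bergman kernel off the diagonal. Iterating it together with the uniform bounds $\|T_{k,\xi^j}\|\le(\sup_X|\xi|)^j$ yields, by induction on $m$, that $\|T_{k,\xi}^m-T_{k,\xi^m}\|\to 0$. Combining the two inputs and using $|\operatorname{tr}(A)|\le d_k\|A\|$ for an operator on the $d_k$-dimensional space $V_k$,
$$
\frac{1}{d_k}\bigl|\operatorname{tr}(T_{k,\xi}^m)-\operatorname{tr}(T_{k,\xi^m})\bigr|\le \|T_{k,\xi}^m-T_{k,\xi^m}\|\to 0,\qquad \frac{1}{d_k}\operatorname{tr}(T_{k,\xi^m})\to\int_X\xi^m\,\omega^\phi_n/Vol,
$$
which is exactly the desired moment convergence.

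I expect the one genuinely hard step to be the Toeplitz product rule and the off-diagonal estimate on the Bergman kernel that underlies it; this is precisely the content of Boutet de Monvel--Guillemin theory and is the reason the theorem is deferred to an appendix. Everything else---the reduction to moments, the telescoping in the perturbation $\xi_k$, the uniform smoothing of the symbol, and the on-diagonal Bergman expansion---is soft, the only point requiring a little care being the uniformity in $k$ of the operator-norm approximations, which is what allows the limit $k\to\infty$ to be interchanged with the approximation of continuous symbols by smooth ones.
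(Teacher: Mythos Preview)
Your proposal is correct and follows the same overall architecture as the paper: reduce to a fixed smooth symbol, compute moments via traces, and control $\operatorname{tr}(T_{k,\xi}^m)$ by the Toeplitz product rule together with first-order Bergman asymptotics. The one substantive difference is in how the product rule $\|T_{k,\xi}T_{k,\eta}-T_{k,\xi\eta}\|\to 0$ is obtained. You attribute it to off-diagonal decay of the Bergman kernel and Boutet de Monvel--Guillemin theory, and flag it as the hard step; the paper instead gives a short elementary proof: for $u\in V_k$ the difference $T_{k,\xi}u-\xi u$ is the $L^2$-minimal solution of $\dbar v=\dbar\xi\wedge u$, and H\"ormander's estimate with the metric $k\phi+\psi$ gives $\|T_{k,\xi}u-\xi u\|^2\le Ck^{-1}\|u\|^2$, from which the product rule follows by three triangle inequalities. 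So the step you expected to require heavy machinery is in fact the cheapest part of the argument. (Minor stylistic difference: for the perturbation $\xi_k$ the paper uses the min-max characterization of eigenvalues to shift each eigenvalue by at most $\sup|\xi_k|$, whereas you telescope $T_k^m-T_{k,\xi}^m$ in operator norm; both work.)
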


We apply this theorem  to the Toepliz operator $k^{-1}T_{k,t}$ for
$t=0,1$. Its symbol is $-\p$ plus a term that goes uniformly to
zero.  In our operators  $k^{-1}T_{k,t}$ the metric on $F$ can be
taken to be $\psi +a(\chi-\phi)$ if we take the metric on $L$ to be
$\phi$. Theorem 3.2 therefore shows that the spectral measures
$d\mu_{k,t}$ of 
 $k^{-1}T_{k,t}$ converge to
$$
d\mu_t= (-\p)_*(dV_t).
$$ 

\bigskip

By the previous section these two measures are the same (for $t=0$ and
$t=1$), namely the
measure $d\mu$ that we associated to the geodesic in $\H_L$. The
inequality 3.7 for the eigenvalues shows that
$$
\int_\R f d\mu_{k,0}\leq \int_\R f dv_k\leq \int_\R f d\mu_{k,1}
$$
if $f$ is continuous and increasing (recall that $\nu_k$ is the
spectral measure of $A_k$). It follows that
$$
\lim \int_\R f d\nu_k = \int_\R f d\mu
$$
for $f$ continuous and increasing. Since any $C^1$-function can be written as a
difference of two increasing functions, the previous limit must hold
for any $C^1$-function too. But this implies weak convergence of the
measures since all the measures involved are probability measures
supported on a fixed compact interval. This finishes the proof of our
main result:
\begin{thm}
 Let $\phi^0$ and $\phi^1$ be two points in $\H_L$ and let $\psi$ be
 an arbitrary smooth metric on the line bundle $F$. Let 
$$
V_k=H^0(X,K_X+F+ kL)
$$
and let $\H_k$ be the space of hermitian norms on $V_k$.
Let $H^t_k$ be the elements in $\H_k$  defined by
$$
\|u\|^2=\int_X |u|^2 e^{-k\phi^t-\psi}
$$
for $t=0,1$. Let for $t$ between
0 and 1 $H^t_k$ be the geodesic in $\H_k$ connecting these two norms
and let $\nu_k$ be their normalized spectral measures as defined
above. Then 
$$
\nu_k \longrightarrow \mu,
$$
 in the weak* topology, where
$\mu=\mu_t$ is defined in 1.1.
\end{thm}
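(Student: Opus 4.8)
The plan is to realize the endpoint norms $H^0_k,H^1_k$ as the boundary values of a curve of $L^2$-norms induced by a smooth subgeodesic in $\H_L$, and then to trap the finite-dimensional geodesic $H^t_k$ between them via (the $kL+F$ version of) Proposition 3.1. Concretely, I first fix a strictly positively curved metric $\chi$ on $L$ over $X\times\Omega$ with $\chi=\phi^0$ on $\{\Re t=0\}$ and $\chi=\phi^1$ on $\{\Re t=1\}$, for instance $\chi=t\phi^1+(1-t)\phi^0+\kappa(\Re t)$ with $\kappa$ sufficiently convex and vanishing at the endpoints, and choose $a>0$ so large that $a\,i\ddbar\chi+i\ddbar\psi\geq0$. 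For $k\geq a$ the weight $(k-a)\phi^t+a\chi+\psi$ on $F+kL$ then has nonnegative curvature over $X\times\Omega$, and I endow $V_k=H^0(X,K_X+F+kL)$ with the norms $\|u\|^2_{k,t}=\int_X|u|^2 e^{-(k-a)\phi^t-a\chi-\psi}$. Since $\chi$ agrees with $\phi^t$ at $t=0,1$, these norms reduce there exactly to the $H^0_k,H^1_k$ of the statement.

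Next I apply the generalization of Proposition 3.1 to the bundles $K_X+F+kL$: because $t\mapsto\|\cdot\|_{k,t}$ has nonnegative curvature it is a subgeodesic in $\H_k$ and hence dominates the finite-dimensional geodesic $H^t_k$ joining the same endpoints, with equality at $t=0,1$. Differentiating this norm inequality at the two endpoints produces the Toeplitz operators $T_{0,k},T_{1,k}$ of 3.4 together with the operator inequalities 3.5 and 3.6, and therefore, after arranging all eigenvalues in increasing order, the pinching 3.7, $\tau^0_j(k)\leq\lambda_j(k)\leq\tau^1_j(k)$.

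I then invoke Theorem 3.2 for $k^{-1}T_{k,t}$ with $t=0,1$. By 3.4 the symbol of $k^{-1}T_{k,t}$ is $-\p$ plus the correction $-k^{-1}(a\dot\chi+\dot\psi-a\p)$, which tends to $0$ uniformly as $k\to\infty$; taking $\phi^t$ as the (strictly positively curved) metric on $L$ and $\psi+a(\chi-\phi^t)$ as the metric on $F$, Theorem 3.2 yields weak convergence of the spectral measures $\mu_{k,0},\mu_{k,1}$ of $k^{-1}T_{0,k},k^{-1}T_{1,k}$ to $\mu_0$ and $\mu_1$, where $\mu_t=(-\p)_*(dV_t)$. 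By the proposition of Section 2 (constancy of $\int_X f(\p)\,\om$ along a geodesic) the measure $\mu_t$ is independent of $t$ along the Monge-Ampère geodesic, so $\mu_0=\mu_1=\mu$.

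Finally, 3.7 gives $\int_\R f\,d\mu_{k,0}\leq\int_\R f\,d\nu_k\leq\int_\R f\,d\mu_{k,1}$ for every increasing continuous $f$; letting $k\to\infty$ squeezes $\int_\R f\,d\nu_k\to\int_\R f\,d\mu$. Writing an arbitrary $C^1$ function as a difference of two increasing functions extends the limit to all $C^1$ test functions, and since every measure involved is a probability measure on the fixed interval $[-M,M]$, $M=\sup|\p|$, this gives weak* convergence $\nu_k\to\mu$. I expect the main obstacle to lie not in this bookkeeping but in the input Proposition 3.1, which rests on the curvature positivity of direct images from \cite{Berndtsson}: that result is proved for metrics smooth up to the boundary, whereas the Monge-Ampère geodesic is only of class $C^{1,1}$. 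I would bridge this, as indicated in the text, by regularizing $\phi^t$ locally by convolution and patching with a partition of unity (legitimate precisely because $\phi^t$ is $C^1$) before passing to the limit in 3.1, checking along the way that the $O(1/k)$ corrections to the symbols stay uniformly small so that Theorem 3.2 continues to apply.
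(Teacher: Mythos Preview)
Your proposal is correct and follows essentially the same route as the paper: the same auxiliary metric $\chi$ and constant $a$, the same modified weights $(k-a)\phi^t+a\chi+\psi$, the same use of Proposition~3.1 to pinch $A_k$ between the Toeplitz operators $T_{0,k}$ and $T_{1,k}$, the same appeal to Theorem~3.2 and to the $t$-independence of $\mu_t$, and the same sandwich argument via increasing test functions. Your closing remark about regularizing the $C^{1,1}$ geodesic to justify Proposition~3.1 is also exactly the point the paper flags after stating that proposition.
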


\bigskip

\noindent The basic observation in the proof is that the inequality
between finite dimensional geodesics and $L^2$-norms coming from
Monge-Ampere geodeics in Proposition 3.1 also gives inequality for the
first derivatives, since we have equality at the endpoint. The next
proposition (cf the sup norm estimate for $\p$ from
\cite{Phong-Sturm}) is another instance of this.
\begin{prop} With the same notation as in the previous theorem, and
$$
A_k =(H^t_k)^{-1}\dot{H}^t_k,
$$
let $\Lambda_{(k)}$ and $\lambda_{(k)}$ be the largest and smallest
eigenvalues of $k^{-1}A_k$. Then, for all $k$,
$$
 \inf -\p\leq\lambda_{(k)}\leq \Lambda_{(k)}\leq \sup -\p.
$$
\end{prop}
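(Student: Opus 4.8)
The plan is to bound $\Lambda_{(k)}$ and $\lambda_{(k)}$ directly from the explicit description of finite dimensional geodesics, bypassing the Toepliz operators $T_{0,k},T_{1,k}$ and the curvature estimate of Proposition 3.1. First I would use the diagonalization recalled in Section 2.1: since $H^t_k$ is the geodesic joining $H^0_k$ and $H^1_k$, there is a basis of $V_k$ orthonormal for $H^0_k$ which diagonalizes $H^1_k$ with eigenvalues $e^{\lambda_j(k)}$, and in this basis $A_k$ is diagonal with entries $\lambda_j(k)$. The Rayleigh quotient for two simultaneously diagonalizable hermitian forms then identifies the extreme eigenvalues as
$$
k\Lambda_{(k)}=\max_{u\neq0}\log\frac{\|u\|^2_{H^1_k}}{\|u\|^2_{H^0_k}},\qquad
k\lambda_{(k)}=\min_{u\neq0}\log\frac{\|u\|^2_{H^1_k}}{\|u\|^2_{H^0_k}},
$$
where by the hypothesis of the preceding theorem $\|u\|^2_{H^t_k}=\int_X|u|^2e^{-k\phi^t-\psi}$ for $t=0,1$.

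The main step is then an elementary estimate of the ratio of these two integrals. Writing $e^{-k\phi^1-\psi}=e^{-k\phi^0-\psi}\,e^{-k(\phi^1-\phi^0)}$, the quotient $\|u\|^2_{H^1_k}/\|u\|^2_{H^0_k}$ is an average of the pointwise factor $e^{-k(\phi^1-\phi^0)}$ against the positive measure $|u|^2e^{-k\phi^0-\psi}$, hence lies between $\inf_X e^{-k(\phi^1-\phi^0)}$ and $\sup_X e^{-k(\phi^1-\phi^0)}$. Taking logarithms and dividing by $k$ gives, for every $k$ and every $j$,
$$
\inf_X(\phi^0-\phi^1)\le k^{-1}\lambda_j(k)\le\sup_X(\phi^0-\phi^1).
$$

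It remains to replace $\phi^0-\phi^1$ by $-\p$. Since the geodesic is of class $C^{1,1}$ its $t$-derivative is bounded and $\phi^0-\phi^1=\int_0^1(-\p)\,dt$ holds pointwise on $X$, so at each point $\phi^0-\phi^1$ is squeezed between $\inf_t(-\p)$ and $\sup_t(-\p)$. Finally I would invoke the fact established in Section 2 that $\sup(-\p)$ and $\inf(-\p)$, the right and left endpoints of the support of $\mu$, are constant along the geodesic, so that $\sup_X(\phi^0-\phi^1)\le\sup(-\p)$ and $\inf_X(\phi^0-\phi^1)\ge\inf(-\p)$. Combining the displays yields the asserted chain $\inf(-\p)\le\lambda_{(k)}\le\Lambda_{(k)}\le\sup(-\p)$.

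I do not expect a serious obstacle. The two points needing a little care are that the variational formula for the extreme eigenvalues relies on the simultaneous diagonalization, not merely on self-adjointness of $A_k$, and that the $C^{1,1}$ regularity is what legitimizes the integral representation of $\phi^0-\phi^1$. It is worth emphasizing that this direct route gives the sharp bound for every $k$ at once: passing instead through the operator inequalities for $T_{0,k}$ and $T_{1,k}$ would force the auxiliary metric $\chi$ into the symbol and would produce the estimate only up to a correction term that vanishes as $k\to\infty$.
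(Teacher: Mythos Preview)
Your argument is correct and takes a genuinely different route from the paper. The paper deduces the bounds from (3.7), the sandwich $T_{0,k}\le A_k\le T_{1,k}$ coming from Proposition~3.1, and then bounds the Toepliz eigenvalues by the sup and inf of their symbols. You bypass the curvature estimate entirely: you read off the extreme eigenvalues of $A_k$ as the extrema of $\log(\|u\|^2_{H^1_k}/\|u\|^2_{H^0_k})$ and control this ratio by the pointwise factor $e^{-k(\phi^1-\phi^0)}$. Your closing observation is also well taken: taken literally, the paper's route via (3.4) carries the auxiliary terms $a\dot\chi$ (and $\dot\psi$, if $\psi$ is allowed to vary) in the symbol, so for general $F$ it yields the inequality only up to an $O(1/k)$ correction, whereas your direct estimate gives the sharp bound for every $k$. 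What your approach loses is the link to the operator inequalities (3.5)--(3.7), which are what the paper actually needs elsewhere; but for this proposition alone your argument is cleaner.

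There is one small gap in your final step. The constancy result you quote from Section~2 concerns the \emph{essential} sup and inf of $-\p$ with respect to $dV_t=\om/Vol$, and for $0<t<1$ the form $\omega^{\phi^t}$ may degenerate, so the essential sup need not a priori equal the pointwise sup over $X$. Thus ``$\sup_t(-\p)(x)\le\sup(-\p)$'' does not follow directly from that constancy. The fix is immediate once you use that $i\ddbar\phi^t\ge 0$ forces $t\mapsto\phi^t(x)$ to be convex for each $x$: then $-\p(x)$ is nonincreasing in $t$, so
\[
-\dot\phi^1(x)\ \le\ \phi^0(x)-\phi^1(x)\ \le\ -\dot\phi^0(x),
\]
and at the endpoints $t=0,1$ the metrics lie in $\H_L$, hence $dV_0,dV_1$ have full support and the actual sup/inf coincide with the (constant) essential ones. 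With this adjustment your proof goes through.
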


\begin{proof} This follows immediately from 3.7, since the
  corresponding inequality for the eigenvalues of the Toepliz
  operators is immediate.
\end{proof}

\section{Approximation of geodesics.}
Again we consider the spaces 
$$
V_k= H^0(X, K_X+F+kL)
$$
equipped with metrics
$$
\|u\|^2_{k\phi+\psi}:= \int_X|u|^2 e^{-k\phi-\psi}
$$
Let
$$
B_{k\phi+\psi}=\sum |s_j|^2,
$$
where $s_j$ is an orthonormal basis for $V_k$ . Since  pointwise
$$
|u|^2/B_{k\phi+\psi}
$$
is a function if $u$ is a section of $K_X+F+kL$,
$$
\log B_{k\phi+\psi}
$$
can be interpreted as a metric on $K_X+F+kL$. In the proof below we
will have use for the following lemma ( we formulate it for $F=0$
and $k=1$), which is a variant on a  well known theme. The basic
underlying idea, to estimate Bergman kernels using the
Ohsawa-Takegoshi theorem is due to Demailly, see e g \cite{Demailly}.
\begin{lma} Let $\omega^0$ be a fixed K\"ahler form on $X$. Let $\phi$
  be a metric (not necessarily smooth) on the line bundle $L$ satisfying
$$
i\ddbar\phi\geq c_0\omega^0.
$$
Let $H_\phi$ be the norm
$$
\int_X |u|^2 e^{-\phi}
$$
for $u$ in $H^0(X, L+K_X)$, and let $B_\phi$ be its Bergman
kernel. Then 
$$
B_\phi\geq \delta_0 e^\phi \omega^0_n
$$
with $\delta_0$ a universal constant, if $c_0$ is sufficiently large
depending on $X$ and $\omega^0$ (only).
\end{lma}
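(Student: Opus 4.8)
The plan is to bound $B_\phi$ from below \emph{pointwise} by exhibiting, at each $x\in X$, a holomorphic section of $L+K_X$ that is large at $x$ relative to its $L^2$-norm, and then to feed this into the extremal characterization
$$
B_\phi(x)=\sup_{u\in H^0(X,L+K_X)}\frac{|u(x)|^2}{\|u\|^2_{H_\phi}}
$$
recorded in Section 2. Here $B_\phi$ and $e^\phi\omega^0_n$ are both densities, so the asserted inequality is frame- and coordinate-independent, and $|u(x)|^2$ denotes the bare density at $x$ (without the weight $e^{-\phi}$). Thus it suffices to prove the extension estimate: for every $x$ and every prescribed value $w=u(x)$ there is a global holomorphic $u$ with
$$
\int_X|u|^2e^{-\phi}\leq \delta_0^{-1}\,\frac{|w|^2e^{-\phi(x)}}{\omega^0_n(x)}.
$$
Dividing then gives $|u(x)|^2/\|u\|^2_{H_\phi}\geq \delta_0\,e^{\phi(x)}\omega^0_n(x)$, hence $B_\phi(x)\geq \delta_0\,e^{\phi(x)}\omega^0_n(x)$.

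First I would establish this estimate by the Ohsawa--Takegoshi method in the form used by Demailly (\cite{Demailly}). Fix $x$, choose coordinates centered at $x$ and a local frame of $L+K_X$, normalized by $\omega^0$ so that the local geometry is the standard one up to bounded distortion. Let $\chi$ be a cut-off equal to $1$ near $x$ and supported in a fixed coordinate ball, and let $v=\chi\,w\,e_{L+K_X}$ be the trivial local extension of $w$, so that $\dbar v=(\dbar\chi)\,w\,e_{L+K_X}$ is supported in the annulus where $\chi$ varies, away from $x$. One recovers a holomorphic section by solving $\dbar u_0=\dbar v$ and setting $u=v-u_0$; to force $u_0(x)=0$, and hence $u(x)=w$, one runs the $\dbar$-estimate against the auxiliary singular weight $\phi+2n\,\chi\log|z|$, whose finiteness near $x$ of $\int|u_0|^2e^{-\phi}/|z|^{2n}$ forces the vanishing in Ohsawa--Takegoshi fashion. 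The hypothesis $i\ddbar\phi\geq c_0\omega^0$ supplies the positivity required by the twisted Bochner--Kodaira--H\"ormander inequality, so that $\int_X|u_0|^2e^{-\phi}$ is controlled by a dimensional multiple of the local integral of $|\dbar\chi|^2|w|^2e^{-\phi}$, which the estimate packages into the bound above.

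The main obstacle, and the only place the size of $c_0$ enters, is making the constant \emph{uniform in $x$} and \emph{independent of $\phi$}, so that $\delta_0$ is genuinely universal. Two negative contributions must be absorbed by the curvature term $c_0\omega^0$: the negative part of $i\ddbar(\chi\log|z|)$ produced by the cut-off in the singular weight, and the corresponding error from $i\ddbar\chi$ in the $\dbar$-estimate; both are supported in the fixed annulus and are bounded there, uniformly in $x$ by a compactness argument over the compact $X$ (the ball and cut-off being chosen independently of $x$). Taking $c_0$ large, depending only on $X$ and $\omega^0$, keeps the net curvature positive at every point, and the resulting $\delta_0$ then depends only on the dimension and the normalization. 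Finally, since $\phi$ is only assumed to satisfy $i\ddbar\phi\geq c_0\omega^0$ as a current and need not be smooth, I would either invoke the singular version of the Ohsawa--Takegoshi theorem directly or regularize $\phi$ from above while preserving the curvature lower bound and pass to the limit, the estimate being stable under such approximation.
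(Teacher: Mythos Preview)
Your argument is correct and follows essentially the same route as the paper: reduce to the extremal characterization of $B_\phi$, produce at each $x$ a local section, cut it off, and correct via H\"ormander's $L^2$-estimates with the singular weight $\phi+2n\chi\log|z|$ to force vanishing of the correction at $x$, with $c_0$ chosen large enough (depending only on $X$ and $\omega^0$) to absorb the negative curvature contributed by $i\ddbar(2n\chi\log|z|)$ on the annulus. The only cosmetic difference is that the paper first invokes Ohsawa--Takegoshi on the coordinate ball to obtain the local section, whereas you take the trivial constant extension $\chi\,w\,e_{L+K_X}$; both work equally well here.
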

\begin{proof} By the extremal characterization of Bergman kernels it
  suffices to find a section $u$ of $K_X+L$ with
$$
|u(x)|^2 e^{-\phi(x)}\geq \delta_0\omega^0_n \int_X |u|^2 e^{-\phi}
$$
Choose a coordinate neighbourhood $U$ centered at $x$ which is
biholomorphic to the unit ball of $\C^n$. By the Ohsawa-Takegoshi
extension theorem we can find a section satisfying the required
estimate over $U$. Let $\eta$ be a cut-off function, equal
to 1 in the ball of radius 1/2 and with compact support in the unit
ball. We then solve, using H\"ormander's $L^2$-estimates
$$
\dbar v=\dbar\eta\wedge u=:g
$$
with  
$$
\int_X|v|^2 e^{-\phi-2n\eta\log|z|} \leq (C/c_0)\int_X|g|^2
e^{-\phi-2n\eta\log|z|}
$$ 
( $z$ is the local coordinate). This can be done since
$$
i\ddbar\phi-2n\eta\log|z|\geq c_0 \omega^0/2
$$
if $c_0$ is large enough. Then $v(x)=0$ since the integral in the left
hand side is finite. Then
$$
u -v
$$
is a global holomorphic section of $K_X+L$ satisfying the required
estimate. 
\end{proof}

\bigskip

\noindent Let $\phi^0$ and $\phi^1$ be two points in $\H_L$, and let
$\psi$ be any smooth metric on $F$. We abbreviate by $H^t_k$ the norms
$\|\cdot\|_{k\phi^t+\psi} $ for $t$ equal to 0 or 1 , and let for $t$
between 0 and 1 $H^t_k$ be the geodesic in $\H_k$, the space of
hermitian norms on $V_k$, joining these two endpoints. 
\begin{thm} Let $\phi^t$ be two points in $\H_L$ for $t$ equal to 0
  and 1, and let for $t$ between 0 and 1 $\phi^t$ be the geodesic in
  $\H_L$ joining them. Let $B_{t, k}$ be the Bergman kernels for the
  norms $H^t_k$. Let $\tau$ be an arbitrary smooth metric on $K_X+F$
  over $\Omega\times X$. Then
$$
\sup_X |k^{-1}\log B_{t, k}-k^{-1}\tau -\phi^t|\leq C k^{-1}\log k
$$
for $0\leq t\leq 1$
\end{thm}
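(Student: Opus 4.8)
The plan is to split the claimed estimate into two one-sided bounds, since its left-hand side is a supremum of an absolute value: a lower bound $k^{-1}\log B_{t,k}\ge \phi^t + k^{-1}\tau - Ck^{-1}\log k$ and an upper bound $k^{-1}\log B_{t,k}\le \phi^t + k^{-1}\tau + Ck^{-1}\log k$, both uniform in $t\in[0,1]$. I would obtain the lower bound from Proposition 3.1 together with the Ohsawa--Takegoshi lower bound of Lemma 4.1, and the upper bound from the fact that $t\mapsto \log B_{t,k}$ is a subgeodesic, combined with the maximality of the Monge--Amp\`ere geodesic.

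For the lower bound I would use the construction of Section 3. With the auxiliary strictly positive metric $\chi$ and the constant $a$ chosen there, the weight $(k-a)\phi^t + a\chi^t + \psi$ on $kL+F$ has nonnegative curvature, so the induced $L^2$-norms $\|\cdot\|_{k,t}$ form a subgeodesic in $\H_k$ with the same endpoints as the finite-dimensional geodesic $H^t_k$. Proposition 3.1 then gives $H^t_k\le \|\cdot\|_{k,t}$, and since the Bergman kernel is a decreasing function of the norm this yields $B_{t,k}\ge \tilde B_{t,k}$, where $\tilde B_{t,k}$ is the Bergman kernel of $\|\cdot\|_{k,t}$. The point of working with $\chi$ rather than $\phi^t$ itself is that the curvature of $(k-a)\phi^t + a\chi^t + \psi$ is bounded below by $a\,i\ddbar\chi^t + i\ddbar\psi \ge c_0\omega^0$ with $c_0$ as large as we wish (for $a$ large), even though the Monge--Amp\`ere geodesic $\phi^t$ may degenerate. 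Hence Lemma 4.1 applies uniformly in $t$ and gives $\tilde B_{t,k}\ge \delta_0 e^{(k-a)\phi^t + a\chi^t + \psi}\omega^0_n$; dividing by $k$ and absorbing the bounded terms produces the lower bound with an error of order $1/k$.

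For the upper bound I would first make the subgeodesic property explicit from the diagonalization in Section 2. Choosing an $H^0_k$-orthonormal basis $s_j$ that diagonalizes $H^1_k$ with eigenvalues $e^{\lambda_j}$, the sections $\tilde s_j(t,x):=e^{-\lambda_j t/2}s_j(x)$ are holomorphic on $\Omega\times X$ and $B_{t,k}=\sum_j |\tilde s_j|^2$, so $\log B_{t,k}$ is a metric of semipositive curvature on the pullback of $K_X+F+kL$, i.e.\ a subgeodesic. At the endpoints $t=0,1$ the norm $H^t_k$ is the honest $L^2$-norm $\int_X|u|^2 e^{-k\phi^t-\psi}$, whose Bergman kernel obeys the standard upper bound $\le Ck^n e^{k\phi^t+\tau}$; this is the sole source of the factor $\log k$. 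Thus $\Psi_k:=k^{-1}(\log B_{t,k}-\tau)$ is, up to an $O(1/k)$ defect in plurisubharmonicity coming from $\tau$, a subgeodesic metric on $L$ whose boundary values at $t=0,1$ are $\le \phi^t + Ck^{-1}\log k$. After correcting the defect by a bounded term of size $O(1/k)$ and invoking the comparison principle that a subgeodesic lies below the Monge--Amp\`ere geodesic with the same boundary data, I would conclude $\Psi_k\le \phi^t + Ck^{-1}\log k$, which is the upper bound.

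I expect the main obstacle to be the upper bound: establishing the subgeodesic property cleanly and running the maximum-principle comparison against $\phi^t$ despite the geodesic's being only $C^{1,1}$ and possibly degenerate, while correctly tracking that the $\log k$ rate enters only through the endpoint Bergman asymptotics. By contrast, once one realizes that $\chi$ must replace $\phi^t$ to guarantee strict positivity for Lemma 4.1, the lower bound is essentially a direct application of Ohsawa--Takegoshi and yields the sharper rate $O(1/k)$.
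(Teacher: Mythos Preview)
Your proposal is correct and follows essentially the same route as the paper: the lower bound via Proposition~3.1 (comparing $H^t_k$ to the $L^2$-norms with weight $(k-a)\phi^t+a\chi+\psi$) followed by Lemma~4.1, and the upper bound via the plurisubharmonicity of $\log B_{t,k}$ (from the explicit diagonalization) combined with the characterization of $\phi^t$ as the supremum of subgeodesics with the given boundary data, the $\log k$ entering only through endpoint Bergman asymptotics. The one cosmetic difference is that where you speak of correcting the $O(1/k)$ curvature defect of $k^{-1}(\log B_{t,k}-\tau)$ by an unspecified bounded term, the paper reuses the same auxiliary metric $\chi$ and passes to the combination $(k-a)k^{-1}(\log B_{t,k}-\tau)+a\chi$ on $kL$, which is genuinely positively curved for $a$ large and makes the comparison with $k\phi^t$ clean.
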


\bigskip

\noindent If $F=0$  this is exactly Theorem 6.1 in \cite{Berndtsson};
if $F=-K_X$ (so we can take $\tau=0$)  it is Theorem 1.2 from the introduction. 

\begin{proof} As just explained $\log B_{t, k} $
is a metric  on $K_X+F+kL$ and moreover
$$
i\ddbar \log B_{t, k} \geq 0.
$$
The last fact follows since $H^t_k$ are geodesics.  Perhaps the easiest
way to see it is to use the explicit description
$$
B_{t,k}=\sum |e^{-t\lambda_j}||s_j|^2
$$
which is immediate from the explicit formula for geodesics in section
2. Thus
$$
k^{-1}(\log B_{t, k}- \tau)
$$
is a metric on $L$. We shall now us the metric $\chi$ on $L$ that we
introduced in the previous section;  it has strictly positive
curvature over $\Omega\times X$ and coincides with $\phi^0$ and
$\phi^1$ respectively when $(\Re) t$ is 0 or 1. Take $a$ to be positive
and consider
$$
(k-a)k^{-1}(\log B_{t, k}- \tau)+ a\chi;
$$
it is a smooth metric on $kL$ and it has positive curvature if $a$ is
sufficiently large. By standard Bergman kernel asymptotics
it differs from $\phi^0$ and $\phi^1$ at most by $C\log k$ when
$(\Re)t$ equals 0 or 1. Hence
$$
(k-a)k^{-1}(\log B_{t, k}- \tau)+ a\chi \leq k\phi^t +C\log k
$$
since the geodesic $\phi^t$ is the supremum of all positively curved
metrics lying below $\phi^0$ and $\phi^1$ on the boundary (cf
\cite{Chen}). Dividing by $(k-a)$ we see that
$$
k^{-1}\log B_{t, k}-k^{-1}\tau -\phi^t \leq Ck^{-1}\log k
$$
since $\chi$, $\tau$ and $\phi^t$ are all uniformly bounded. 
The crux of the proof is the opposite estimate.

\bigskip

\noindent To estimate $B_{t k}$  from below we first compare it to the
Bergman kernel 
$$
B_{\phi^t, k},
$$
which is defined using the hermitian norms
$$
\|u\|^2_{*}=\int_X|u|^2 e^{-(k-a)\phi^t-a\chi-\psi}.
$$
Again, the metric $ (k-a)\phi^t+ a\chi +\psi$ that we use here has
positive curvature if $a$ is sufficiently large. These norms coincide
with $H^t_k$ on the boundary and by Proposition 3.1 they are bigger
than $H^t_k$ in the interior. This implies (by the extremal
characterization of Bergman kernels) that the respective Bergman
kernels satisfy the opposite inequality, so we get
$$
\log B_{t, k}\geq \log B_{\phi^t, k}.
$$
To complete the proof it therefore suffices to show that
$$
B_{\phi^t, k}\geq C e^{k\phi^t+\tau},
$$
or equivalently
$$
B_{\phi^t, k}\geq C e^{(k-a)\phi^t +a\chi+\tau}
$$
But this follows from Lemma 4.1 since we can take $a$ arbitrarily
large so that
$$
i\ddbar(k-a)\phi^t +a\chi+\tau
$$
meets the curvature assumptions of that lemma.
\end{proof}

\section{ Appendix: Background on Toepliz operators.}
We consider Toepliz operators $T_{k,\xi}$ on the spaces
$$
V_k=H^0(X, K_X+F+kL)
$$
with symbol $\xi$ in $C(X)$. $T_{k,\xi}$ is defined by
$$
\langle T_{k,\xi}u,u\rangle_{k\phi+\psi} = \int_X \xi |u|^2
e^{-k\phi-\psi},
$$
where the inner product is
$$
\langle v,u\rangle_{k\phi+\psi} = \int_X v \bar u e^{-k\phi-\psi}.
$$
In other words
$$
T_{k,\xi} u= P_k(\xi u)
$$
where $P_k$ is the Bergman projection.

\medskip

Recall that if $T$ is any hermitian endomorphism on an $N$-dimensional inner
product space, and if we order its eigenvalues
$$
\lambda_\leq \lambda_2\leq ... \lambda_N,
$$
then
$$
\lambda_j = \inf_{Vj\subset V, dim V_j =j} \|T|_{V_j}\|.
$$
From this it follows that if we perturb the operator $T$ to $T+S$
where $\|S\|\leq \epsilon$, then the eigenvalues shift at most by
$\epsilon$. This means that if we consider the spectral measure of
$$
T_{k,\xi+\xi_k}
$$
where $\xi_k$ goes uniformly to 0, the limit of the spectral measures
is the same as the limit of the spectral measures of
$$
T_{k,\xi}.
$$
In other words, in the proof of Theorem 3.2 we may assume that
$\xi_k=0$. By the same token, we may assume that $\xi$ is smooth,
since continuous functions can be approximated by smooth functions. 
The most important part of the proof of Theorem 3.2 is the next lemma.

\begin{lma} Let $d_k= dim(V_k)$. Then 
$$
\lim \frac{1}{d_k} tr T_{k,\xi}=\int_X\xi \omega^\phi_n/Vol.
$$
\end{lma}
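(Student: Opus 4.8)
The plan is to evaluate the trace explicitly in an orthonormal basis, which reduces the assertion to the leading-order Bergman asymptotics. First I would fix a basis $\setof{s_j}_{j=1}^{d_k}$ of $V_k$ orthonormal for $\langle\cdot,\cdot\rangle_{k\phi+\psi}$. Since $T_{k,\xi}u=P_k(\xi u)$ with $P_k$ the orthogonal projection,
$$
\operatorname{tr}T_{k,\xi}=\sum_{j}\langle T_{k,\xi}s_j,s_j\rangle_{k\phi+\psi}=\sum_{j}\int_X\xi\,\abs{s_j}^2 e^{-k\phi-\psi}=\int_X\xi\,\beta_k ,
$$
where $\beta_k:=\sum_j\abs{s_j}^2 e^{-k\phi-\psi}$ is the intrinsic Bergman density. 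The point stressed earlier for $K_X$-twisted bundles is that each $\abs{s_j}^2 e^{-k\phi-\psi}$ is genuinely a measure on $X$, so $\beta_k$ is a well-defined positive measure, independent of the basis, with total mass $\int_X\beta_k=\sum_j\norm{s_j}^2_{k\phi+\psi}=d_k$. Hence $\beta_k/d_k$ is a probability measure, and (recalling that by the remarks preceding the statement we may take $\xi$ smooth) the lemma is exactly the statement that these normalized Bergman measures converge weakly to $\omega^\phi_n/Vol$.

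Granting the leading Bergman asymptotic $k^{-n}\beta_k\to\omega^\phi_n$ uniformly on $X$, the lemma follows at once: the special case $\xi\equiv 1$ (equivalently Riemann--Roch) gives $k^{-n}d_k\to\int_X\omega^\phi_n=Vol$, and dividing the two limits yields
$$
\frac{1}{d_k}\operatorname{tr}T_{k,\xi}=\frac{\int_X\xi\,\beta_k}{\int_X\beta_k}\longrightarrow\frac{\int_X\xi\,\omega^\phi_n}{\int_X\omega^\phi_n}=\int_X\xi\,\omega^\phi_n/Vol .
$$

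The real content, and the step I expect to be the main obstacle, is therefore the asymptotic $k^{-n}\beta_k\to\omega^\phi_n$ itself, which I would prove by a two-sided sandwich anchored on a local model. For the upper bound I would use the extremal characterization $\beta_k(x)=\sup\setof{\abs{u(x)}^2 e^{-k\phi(x)-\psi(x)}:\norm{u}_{k\phi+\psi}=1}$ together with the sub-mean value inequality for $\abs{u}^2$ (holomorphic in a local frame) over a coordinate ball of radius $\sim k^{-1/2}$, chosen adapted to $x$ so that the linear part of the weight vanishes; then $\sup\phi-\phi(x)=O(1/k)$ on the ball, and comparing weights gives $\beta_k(x)\leq Ck^n$. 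For the matching lower bound I would invoke Lemma 4.1, whose Ohsawa--Takegoshi construction produces near each point a section peaking at the natural scale and forces $\beta_k(x)\geq c\,k^n$. Rescaling coordinates by $\sqrt k$ about a fixed $x$ turns $k\phi$ into the Bargmann--Fock weight attached to $i\ddbar\phi(x)$, whose model Bergman density on the diagonal equals a universal constant times $(i\ddbar\phi(x))^n$; a normal-families argument then identifies the pointwise limit of $k^{-n}\beta_k$ as $c_n(i\ddbar\phi)^n$ with $c_n$ depending only on $n$. The universal constant is then pinned without any explicit model computation: integrating and using $k^{-n}d_k\to Vol=\int_X\omega^\phi_n=\int_X(i\ddbar\phi)^n/n!$ forces $c_n=1/n!$, so the limit measure is exactly $\omega^\phi_n$.

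The genuinely delicate part of this last step is upgrading the pointwise model identification to convergence that is \emph{uniform} in $x$, since both the sandwich constants and the quadratic approximation of $k\phi$ must be controlled uniformly over the compact $X$. This is, however, entirely classical (Bouché, Tian), and it is worth noting that for the present lemma one needs only the weak convergence of $k^{-n}\beta_k$ tested against the single smooth symbol $\xi$, a mild weakening of the full uniform on-diagonal expansion.
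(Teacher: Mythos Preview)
Your reduction is exactly the paper's: express $\operatorname{tr}T_{k,\xi}$ as $\int_X\xi\,B_{k\phi+\psi}e^{-k\phi-\psi}$ and then invoke first-order Bergman asymptotics to conclude. The paper simply cites the asymptotic $B_{k\phi+\psi}e^{-k\phi-\psi}/d_k\to\omega^\phi_n/Vol$ as known, whereas you additionally sketch its proof; that extra material is sound but not required here.
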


\begin{proof} Let $B_{k\phi+\psi}$ be the Bergman kernel. Then
$$
 \frac{1}{d_k} tr T_{k,\xi}= \frac{1}{d_k}\int_X \xi B_{k\phi+\psi}
 e^{-k\phi-\psi} .
$$
But, by the formula for (first order) Bergman asymptotics
$$
 B_{k\phi+\psi} e^{-k\phi-\psi}/d_k
$$
tends to $\omega^\phi_n/Vol$, so the lemma follows.
\end{proof}

\begin{lma} Let $\xi$ and $\eta$ be smooth functions on $X$. Then
$$
\| T_{k,\xi} T_{k, \eta} - T_{k, \,\xi\eta}\|^2 \leq C k^{-1}.
$$
\end{lma}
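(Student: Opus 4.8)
The plan is to combine the algebraic structure of a product of Toeplitz operators with Hörmander's $L^2$-estimate for $\dbar$, the gain of a factor $k^{-1}$ coming from the fact that the curvature of the weight grows linearly in $k$. Throughout I write $T_{k,\xi}u=P_k(\xi u)$, where $P_k$ is the Bergman projection onto $V_k=H^0(X,K_X+F+kL)$ for the norm $\|\cdot\|_{k\phi+\psi}$, and I use that $\phi$ has strictly positive curvature, so that $\omega^\phi=i\ddbar\phi$ is a Kähler form which I take as reference metric.

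First I would record the basic identity. For $u\in V_k$, using $P_k^2=P_k$,
$$
(T_{k,\xi}T_{k,\eta}-T_{k,\xi\eta})u
= P_k\big(\xi\,P_k(\eta u)\big)-P_k(\xi\eta u)
= -P_k(\xi\,g_u),
$$
where $g_u:=(I-P_k)(\eta u)=\eta u-P_k(\eta u)$ is the component of $\eta u$ orthogonal to the holomorphic sections. Since $P_k$ is a norm-decreasing projection and $\xi$ is bounded, one has $\|P_k(\xi g_u)\|_{k\phi+\psi}\leq \|\xi\|_\infty\,\|g_u\|_{k\phi+\psi}$, so it suffices to prove the single-vector bound $\|g_u\|_{k\phi+\psi}\leq C k^{-1/2}\|u\|_{k\phi+\psi}$; taking the supremum over unit vectors $u$ and squaring then yields the asserted operator-norm estimate.

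The second step is to identify $g_u$ as the minimal solution of a $\dbar$-equation. Because $u$ is holomorphic, $\dbar(\eta u)=\dbar\eta\wedge u$, and $P_k(\eta u)$ being holomorphic is annihilated by $\dbar$; hence $g_u$ solves $\dbar g_u=\dbar\eta\wedge u$ and, being orthogonal to $\Ker\dbar$, is exactly the $L^2$-minimal solution. The crucial quantitative step is then to apply Hörmander's estimate on $K_X+F+kL$ with weight $k\phi+\psi$. Since $\phi$ has strictly positive curvature, for $k$ large the curvature of the weight dominates $kc_0\,\omega^\phi$ for a fixed $c_0>0$, so the curvature operator $[i\Theta,\Lambda]$ on the relevant $(n,1)$-forms is bounded below by $kc_0$ and its inverse is $O(k^{-1})$. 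This gives
$$
\|g_u\|^2_{k\phi+\psi}
\leq \frac{C}{k}\int_X |\dbar\eta\wedge u|^2\,e^{-k\phi-\psi}
\leq \frac{C'}{k}\int_X |u|^2 e^{-k\phi-\psi}
= \frac{C'}{k}\,\|u\|^2_{k\phi+\psi},
$$
where I used $|\dbar\eta\wedge u|^2_{\omega^\phi}\leq |\dbar\eta|^2_{\omega^\phi}\,|u|^2$ together with the boundedness of $\dbar\eta$.

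The main obstacle is making the gain of the factor $k^{-1}$ rigorous: one must check that the curvature form of $k\phi+\psi$ on the twisted canonical bundle really dominates $kc_0\,\omega^\phi$ for $k$ large, so that the (possibly negative) curvature contributions of $\psi$ and of $K_X+F$ are absorbed as lower-order terms, and one must verify that the pointwise norm of $\dbar\eta\wedge u$ is measured against the same $\omega^\phi$ appearing in Hörmander's inequality. Once this curvature lower bound of order $k$ is in place, combining the three displays above and squaring is routine and produces $\|T_{k,\xi}T_{k,\eta}-T_{k,\xi\eta}\|^2\leq Ck^{-1}$.
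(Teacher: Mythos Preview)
Your proof is correct and uses the same core mechanism as the paper: the orthogonal complement $(I-P_k)(\eta u)$ is the $L^2$-minimal solution of $\dbar v=\dbar\eta\wedge u$, and H\"ormander's estimate with curvature of order $k$ supplies the factor $k^{-1}$. The paper's organization differs only cosmetically---it first isolates the single bound $\|T_{k,\xi}u-\xi u\|^2\leq Ck^{-1}\|u\|^2$ and then combines three instances of it via the triangle inequality, whereas your algebraic identity $(T_{k,\xi}T_{k,\eta}-T_{k,\xi\eta})u=-P_k\bigl(\xi\,(I-P_k)(\eta u)\bigr)$ reduces everything to one application---but the substance is identical.
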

\begin{proof} Note that if $u$ is in $V_k$ then
$$
T_{k, \xi} u - \xi u =: v_k
$$
is the $L^2$-minimal solution to the $\dbar$-equation
$$
\dbar v_k =\dbar\xi\wedge u
$$
(this is where we want $\xi$ smooth). By H\"ormander $L^2$-estimates
$$
\|T_{k, \xi} u - \xi u\|^2_{k\phi+\psi} \leq \|\dbar\xi\wedge
u\|^2_{k\phi+\psi}  \leq C k^{-1}\|u\|^2_{k\phi+\psi} 
$$
(the last inequality is because the pointwise norm
$\|\dbar\xi\|^2_{\theta}\leq C/k$ when we 
measure with respect to the K\"ahler metric $\theta= i\ddbar(k\phi+\psi)$). 
Therefore, if $u$ is of norm at most 1,
$$
\| T_{k,\xi} T_{k, \eta}u -\xi T_{k,\eta}u\|^2 \leq C k^{-1},
$$
$$
\|\xi T_{k,\eta}u -\xi\eta u\|^2 \leq C k^{-1}
$$
and
$$
\| T_{k,\, \xi\eta}u  -\xi \eta u\|^2 \leq C k^{-1}
$$
and the lemma follows.
\end{proof}
\medskip
Let $\mu_k$ be the normalized spectral measures of $T_{k, \xi}$. In
order to study their weak limits, it is enough to look at their
moments
$$
\int_\R x^p d\mu_k(x) =\frac{1}{d_k} tr T^p_{k, \xi}.
$$
By Lemma 7.2 and induction
$$
\|T_{k, \xi}^p- T_{k, \xi^p}\|^2 \leq C k^{-1}.
$$
Hence
$$
\frac{1}{d_k} tr T^p_{k, \xi}=\frac{1}{d_k} tr T_{k, \xi^p} +
O(k^{-1})
$$
and
$$
\lim \frac{1}{d_k} tr T_{k, \xi^p}= \int_X \xi^p \omega^\phi_n/Vol
$$
by Lemma 7.1. Thus,
$$
\lim \int_\R x^p d\mu_k(x)= \frac{1}{d_k} tr T^p_{k, \xi} =   \int_X
\xi^p\omega^\phi_n/Vol 
$$
for any power $x^p$. Taking linear combinations we get the same thing
for any polynomial  , and therefore for any continuous function. This
completes the proof of Theorem 3.2.

\def\listing#1#2#3{{\sc #1}:\ {\it #2}, \ #3.}

\end{document}